\documentclass[final, 11pt]{article}

\usepackage{mdwlist}
\usepackage{graphicx}
\usepackage{amsfonts}
\usepackage{ulem}
\usepackage{amsmath}
\usepackage{amsthm}
\usepackage{amssymb}
\usepackage{fancyhdr}
\usepackage{titlesec}
\usepackage{indentfirst}
\usepackage{booktabs}
\usepackage{verbatim}
\usepackage{color}
\usepackage{latexsym}
\usepackage{amscd}
\usepackage{esvect}
\usepackage{graphicx}
\usepackage{upgreek}
\usepackage{subcaption}   
\usepackage{caption}
\usepackage[page,header]{appendix}
\usepackage{titletoc}
\usepackage{mathrsfs}
\usepackage[numbers]{natbib}
\usepackage{float}
\usepackage{lipsum}
\usepackage[export]{adjustbox} 
\usepackage{stackengine} 
\numberwithin{equation}{section}
\newtheorem{theorem}{Theorem}[section]
\newtheorem{lemma}[theorem]{Lemma}

\newtheorem{proposition}[theorem]{Proposition}

\newtheorem{remark}[theorem]{Remark}

\allowdisplaybreaks

\setlength{\abovecaptionskip}{25pt plus 3pt minus 2pt}


\topmargin 0cm \oddsidemargin 0.66cm \evensidemargin 0.66cm
\textwidth 14.80cm \textheight 22.23cm

\makeatletter
\newcommand{\ostar}{\mathbin{\mathpalette\make@circled\star}}
\newcommand{\make@circled}[2]{%
  \ooalign{$\m@th#1\smallbigcirc{#1}$\cr\hidewidth$\m@th#1#2$\hidewidth\cr}%
}
\newcommand{\smallbigcirc}[1]{%
  \vcenter{\hbox{\scalebox{0.77778}{$\m@th#1\bigcirc$}}}%
}
\makeatother

\def\spnu{\langle v\rangle^k \boldsymbol{\nu}}
\def\sp{\langle v\rangle^k}

\def\DT{^{(\delta)}}

\def\pn{\partial^n}
\def\pl{\partial^l}
\def\bff{\bf {f}}
\def\bfg{\bf {g}}
\def\nk{\binom{n}{k}}
\def\nl{\binom{n}{l}}
\def\nm{\binom{n}{m}}
\def\lm{\binom{l}{m}}

\headheight 0cm
\headsep 0cm

\begin{document}

\title{On the multi-species Boltzmann equation with uncertainty and its stochastic Galerkin approximation \footnote{E. S. Daus acknowledges partial support from the Austrian Science Fund (FWF), grants P27352 and P30000, S. Jin is supported by NSFC grants No. 11871297 and No. 31571071, L. Liu is supported by the start-up fund from The Chinese University of Hong Kong. }}
\author{Esther S. Daus\footnote{
Institute for Analysis and Scientific Computing, Vienna University of Technology, Wiedner Hauptstra{\ss}e 8-10, 1040 Wien, Austria (esther.daus@tuwien.ac.at)},
Shi Jin\footnote{School of Mathematical Sciences, Institute of Natural Sciences, MOE-LSC and SHL-MAC, Shanghai Jiao Tong University, Shanghai, China (shijin-m@sjtu.edu.cn)}, 
Liu Liu\footnote{Department of Mathematics, The Chinese University of Hong Kong, Shatin, N.T., Hong Kong SAR 
(lliu@math.cuhk.edu.hk)}}
\date{\today}
\maketitle

\abstract{In this paper the nonlinear multi-species Boltzmann equation with random uncertainty
coming from the initial data and collision kernel is studied. Well-posedness and long-time behavior -- exponential decay to the global equilibrium
-- of the analytical solution, and spectral gap estimate for the corresponding linearized gPC-based stochastic Galerkin system are obtained, by using and extending the analytical tools provided in [M. Briant and E. S. Daus, {\it Arch. Ration. Mech. Anal.}, 3, 1367--1443, 2016]  for the deterministic
problem in the perturbative regime, and in [E. S. Daus, S. Jin and L. Liu, {\it Kinet. Relat. Models}, 12, 909--922, 2019] for the single-species problem with uncertainty. \textcolor{black}{The well-posedness result of the sensitivity system presented here has not been obtained so far neither in the single species case nor in the multi-species case.} }

\section{Introduction}

We consider the multi-species Boltzmann equation describing the evolution of a multi-species mono-atomic nonreactive gaseous mixture with additional uncertainty coming from the initial data and collision kernel, which 
was studied analytically in the deterministic setting in \cite{BBBD,BBBG,BGPS,BS,Marc16,BD16,Linearized-Boltz16}. Compared to the single-species deterministic analysis of the Boltzmann equation, dealing with different conserved quantities \textcolor{black}{due to different thermodynamic properties of mixtures (see the multi-species H-theorem in \cite{Des05, Multi-99}) provided the main difficulty in the analysis for the multi-species deterministic problem. For more details see subsection \ref{subsec:2.2}.}

In this paper, we deal with the multi-species Boltzmann equation with an additional random parameter described by the random variable $z$, which lies in the random space $I_z$ with a probability measure $\pi(z)dz$. Thus, the solution $f=f(t,x,v,z)$ depends also on the random parameter $z \in I_z$.
We will conduct the sensitivity analysis, which aims to study how the random inputs in the system propagate in time and how they influence the solution in the long time \cite{SmithBook}. To our knowledge, uncertainty quantification (UQ) for any nonlinear multi-species kinetic model has not been studied so far, 
while general single-species linear and non-linear collisional kinetic problems with multiple scales and uncertainty were studied in \cite{LJ18}. 

Research on uncertainty quantification for kinetic equations has not started until recently, and the reason for the growing interest in these problems is the following. 
Kinetic equations, derived from $N$-body Newton's equations via the mean-field limit \cite{BGP}, 
typically contain an integral operator modeling interactions between particles. 
Since calculating the collision kernel from first principles is impossible for complex particle systems, only empirical formulas are used for general particles \cite{Cercignani}. Consequently, this inevitably brings modeling errors, so the collision kernel contains some uncertainty.  
Other sources of uncertainties may come from inaccurate measurements of the initial or boundary data, 
forcing or source terms. 
We refer to the book \cite{UQ-Book} and
the recent articles and reviews \cite{DesPer, DPZ, HuReview, Ma, Jin-Liu, Qin-Li, Liu-BP, LJ18, LM19} for more detailed studies in this direction. 

The main goal of this paper is to study the well-posedness and long-time behavior of the nonlinear multi-species Boltzmann equation 
under the impact of random uncertainty and its stochastic Galerkin approximation in the perturbative regime. 
The first part of our paper (Section \ref{sec:3}) studies the well-posedness and exponential decay of the solution with random initial data 
and collision kernel in suitable Sobolev spaces in the perturbative setting, in which the initial data is assumed to be close to the global equilibrium. 
Our proof is based on the analysis of the Cauchy theory of the multi-species Boltzmann equation with uncertainty in the weighted Lebesgue space $L^1_vL^{\infty}_x(\langle v \rangle^k)L^{\infty}_z$ \textcolor{black}{(see \eqref{function.spaces} for the precise definitions)} with a polynomial weight of order $k > k_0$ (where $k_0$ is the threshold derived in \cite[Section 6]{BD16}, which recovers in the particular case of a multi-species hard spheres mixture (with equal molar masses) the optimal threshold \textcolor{black}{of finite energy} $k_0=2$ obtained in the single-species setting in \cite{GMM}). 

The additional difficulty in our framework with uncertainty compared to the deterministic setting is to handle the extra high-order derivatives in the random parameter $z$, which naturally appear from the fact that we introduce uncertainty into the model. 
We refer to the equations obtained by taking the $z$-derivatives of \textcolor{black}{the $i$-th component of} the density functions governed by the multispecies Boltzmann equation as the {\it sensitivity equations}. We manage to control these new terms containing high-order $z$-derivatives by designing a {\it new} decomposition built upon the factorization of Gualdani, Mischler and Mouhot in \cite{GMM}, with a mathematical induction in the order of $z$-derivatives. This factorization technique was established by Gualdani, Mischler and Mouhot in \cite{GMM}, later adapted to the nonlinear perturbative setting in \cite{Marc}, and generalized to the multi-species deterministic framework \textcolor{black}{with different molar masses} in \cite{BD16}. \textcolor{black}{For more details on the factorization method see section \ref{subsec:1}.} We want to emphasize that there has not been established any rigorous existence analysis for uncertain kinetic equations in any previous work \cite{ZLM-DG, DJL, Jin-Liu, Liu-BP, LJ18} yet, even not for the single-species case.

\textcolor{black}{Concerning the task of numerically solving kinetic equations with uncertainties}, one of the standard and efficient numerical methods is the generalized polynomial chaos approach in the stochastic Galerkin (referred to as gPC-SG) framework \cite{Ghanem, GWZ, Xiu}. Compared to the classical Monte Carlo method, the gPC-SG approach enjoys a spectral accuracy in the random space--if the solution is sufficiently smooth--while the Monte Carlo method converges with the rate of $O(1/\sqrt{N})$, where $N$ is the number of simulations. 
\textcolor{black}{Note that the smoothness of the solution in the random space is one motivation for us to use the SG method. However, other types of non-intrusive methods, such as the stochastic collocation method, could also work well especially for high-dimensional problems, but for us it seemed to be mathematically more interesting to study the sensitiveness of the Galerkin system and its convergence.}

The second part of our paper (Section \ref{sec:4}) obtains the spectral gap estimate for the linearized gPC-Galerkin system. 
Compared to \cite{DJL} on the single-species gPC-SG Boltzmann system, the generalization to the multi-species case here can be done by adapting techniques from the proof for the multi-species H-theorem, see for instance \cite{ Linearized-Boltz16, Des05}. Establishing this spectral estimate is essential in order to understanding the long-time behavior of the gPC-SG approximation.

{\textcolor{black}We remark that our work relies on several existing literature on UQ for general kinetic models \cite{UQ-Book}, sensitivity analysis \cite{LJ18}, spectral convergence of the gPC-Galerkin method \cite{DJL} and multi-species Boltzmann equations \cite{BD16}. Readers may refer to those work for a more detailed overview. 
}

The paper is organized as the following. In Section \ref{sec:2}, we introduce the multispecies Boltzmann equation with uncertainty and 
present \textcolor{black}{the assumptions} for the two main results of this paper. 
In Section \ref{sec:3}, we show the existence and uniqueness of the sensitivity equations in the perturbative setting and establish the 
exponential decay of each order $z$-derivative of the solution. 
In Section \ref{sec:4}, we extend the previous work \cite{DJL, LJ18} to the multi-species setting and obtain the 
spectral gap for the linearized gPC-SG system. Finally, we formulate our conclusions in Section \ref{sec:5}. 
 
\section{The multispecies Boltzmann equations with uncertainty} 
\label{sec:2}

The evolution of a dilute ideal gas composed of $N \geq 2$ different species of chemically non-interacting mono-atomic particles
with same molar particle masses can be modeled by the following system of Boltzmann equations (see \cite{Marc16, BD16,Linearized-Boltz16} for the deterministic case), 
with some uncertainty characterized by a random variable $z\in I_z$, coming from both the initial data and the collision kernels, 
\begin{align}
\label{model}
\begin{split}
&\displaystyle\partial_t F_i + v\cdot\nabla_x F_i = Q_i (F), \qquad t>0, \\[4pt]
&\displaystyle F_i(0,x,v,z)=F_{I,i}(x,v,z), \qquad 1\leq i\leq N, \, (x,v)\in \mathbb T^3 \times \mathbb R^3, \, z \in I_z, 
\end{split}
\end{align}
where ${\bf F}=(F_1, \cdots, F_N)$ is the distribution function of the system, 
with $F_i$ ($1\leq i \leq N$) describing \textcolor{black}{the distribution function} of the $i$-th species. The spatial domain $\mathbb T^3$ is the three-dimensional torus.
For the sake of simplicity of the presentation, compared to \cite{BD16}, we set all the molar masses to be equal, e.g., $m_i=1$, for $i=1, \cdots, N$. 
The right-hand side of the kinetic equation (\ref{model}) is the $i$-th component of the nonlinear collision operator 
${\bf Q}({\bf F}) = (Q_1({\bf F}), \cdots, Q_N({\bf F}))$, and is defined by 
\begin{equation}\label{Q-F} Q_i({\bf F}) = \sum_{j=1}^N Q_{ij}(F_i, F_j), \qquad 1\leq i\leq N, \end{equation}
where $Q_{ij}$ models interactions between particles of species $i$ and $j$ ($1\leq i, j \leq N$), 
\begin{equation}\label{Q-F.1} Q_{ij}(F_i, F_j)(v,z) = \int_{\mathbb R^3\times\mathbb S^2} B_{ij}(|v-v^{\ast}|, \cos\theta, z)(F_i^{\prime}F_j^{\prime\ast}
 - F_i F_j^{\ast})\, dv^{\ast}d\sigma, 
 \end{equation}
 where we used the shorthands \(F_{i}^{\prime}=F_{i}\left(v^{\prime}\right), F_{i}=F_{i}(v), F_{j}^{*}=F_{j}\left(v_{*}^{\prime}\right)\) and \(F_{j}^{*}=F_{j}\left(v_{*}\right)\). \textcolor{black}{The velocities before and after the collisions are described by the following relation: 
 \begin{equation*}
  v' = \frac{v+v^*}{2} + \frac{|v-v^*|}{2}\sigma, \quad
	v'^* = \frac{v+v^*}{2} - \frac{|v-v^*|}{2}\sigma,
\end{equation*}
which follows from the fact that we assume the collisions to be
elastic, i.e., the momentum and kinetic energy are conserved on the microscopic level:
\begin{equation*}
  v' + v'^* = v + v^*, \quad \frac12|v'|^2 + \frac12|v'^*|^2 
	= \frac12|v|^2 + \frac12|v^*|^2.
\end{equation*}}
Here the collision kernel $B$ depends on the relative velocity $|v-v^{\ast}|$, the cosine of the deviation angle $\theta$, and the random variable 
$z \in I_z\subseteq \mathbb{R}$. For simplicity, we consider a one-dimensional random space, but our analysis can be easily extended to higher dimensional cases as well. 
 
The global equilibrium, which is the unique stationary solution to (\ref{model}), is given by 
$ M^{\infty}=( M_1^{\infty}, \cdots,  M_N^{\infty})$, with 
$$M_i^{\infty}(v) = c_{\infty, i} \left(\frac{1}{2\pi k_B\theta_{\infty}} \right)^{3/2}\exp\left(-\frac{|v-u_{\infty}|^2}{2 k_B \theta_{\infty}}\right), $$
where for $1\leq i \leq N$, 
\begin{align*}
&\displaystyle  c_{\infty,i} = \int_{\mathbb T^3\times\mathbb R^3} M_i^{\infty} \, dx dv, 
\qquad\qquad \rho_{\infty}=\sum_{i=1}^N c_{\infty, i},    \\[4pt]
&\displaystyle  u_{\infty} = \frac{1}{\rho_{\infty}}\sum_{i=1}^N \int_{\mathbb T^3\times\mathbb R^3} v \,  M_i^{\infty} \, dx dv,  \qquad
\theta_{\infty} = \frac{1}{3 \rho_{\infty}}\sum_{i=1}^N \int_{\mathbb T^3\times \mathbb R^3} |v-u_{\infty}|^2\,  M_i^{\infty}\, dxdv. 
\end{align*}
 By translating and scaling the coordinate system, one can assume $u_{\infty}=0$ and $k_B\theta_{\infty}=1$, and then the global equilibrium becomes
$${\bf M} = (M_i)_{1\leq i\leq N}, \qquad M_i(v) = c_{\infty,i}\left(\frac{1}{2\pi}\right)^{3/2}e^{-\frac{|v|^2}{2}}. $$ 
\hspace{1cm}

\subsection{Main assumptions on the random collision kernel}
We summarize here the assumptions on the random collision kernel that are needed throughout the whole paper: 
\renewcommand{\labelenumi}{(H\theenumi)}
\begin{enumerate}
\item The following symmetry holds for each $z \in I_z \subseteq \mathbb{R}$: 
\begin{equation}\label{b-Assump0} B_{ij}(|v-v_*|,\cos\theta, z) = B_{ji}(|v-v_*|,\cos\theta, z)\quad\mbox{for 
}1\le i,j\le N. \end{equation}
\item The collision kernels for each $z \in I_z \subseteq \mathbb{R}$ are decomposed into the product
\begin{equation}\label{b-Assump1} B_{ij}(|v-v_*|,\cos\theta, z) = \Phi_{ij}(|v-v_*|)\, b_{ij}(\cos\theta, z),
\quad 1\le i,j\le N,
\end{equation}
where the functions $\Phi_{ij}\ge 0$ are called the kinetic part and 
the angular part $b_{ij}(\cos\theta, z)>0$ is assumed to be uncertain. 

\item 
We consider the case of hard potentials $\gamma \in (0,1]$ or Maxwellian molecules ($\gamma=0$), and thus the kinetic part takes the form: 
$$\Phi_{ij}(|v-v_*|)=C_{ij}^{\Phi}\,|v-v_*|^{\gamma}, \quad 
C_{ij}^{\Phi}>0,~\:\gamma\in[0,1], \quad \forall\: 1 \leq i,j\leq N.$$
\item For the angular part, for each $z \in I_z \subseteq \mathbb{R}$ we assume a strong form of Grad's angular 
cutoff, i.e., $\exists$ 
$C_b$, $C_{b_1}>0$ such that
for all $1\le i,j\le N$ and $\theta\in[0,\pi]$,
\begin{equation}\label{bb}  0<b_{ij}(\cos\theta,z)\le C_b\, |\sin\theta|\,|\cos\theta| \leq C_b, \quad 
 \partial_{\theta} b_{ij}(\cos\theta,z)\le C_{b_1}. \end{equation}
Furthermore,
$$  \min_{1\le i\le 
N}\inf_{\sigma_1,\sigma_2\in\ \mathbb S^2}\int_{\mathbb S^2}\min\big\{ 
b_{ii}(\sigma_1\cdot\sigma_3,z),b_{ii}(\sigma_2\cdot\sigma_3,z)\big\}\:d\sigma_3 
 > 0. $$
 
\item In addition, we assume the following condition on $|\partial^k_z b_{ij}|$ for all $z$: 
 \begin{equation}\label{b-Assump} |\partial^k_z b_{ij}(\cos\theta, z)| \leq C_b, \qquad \forall \, 0\leq k \leq r, \quad 1\leq i, j \leq N, \end{equation}
 where $r \in \mathbb N$ is determined by the regularity of the random initial data, and $C_b$ is the same upper bound as in \eqref{bb}. 
\end{enumerate}

In (H1)--(H4), for each fixed $z$ the same conditions are assumed as in the deterministic problem \cite{BD16}. 
The new assumption appears in (H5).  We mention that our analysis in this work also applies to the case when the kinetic part $\Phi_{ij}$ of the 
collision kernel is assumed uncertain, i.e., $B_{ij}$ takes the form: $$B_{ij}(|v-v_*|,\cos\theta, z) = \Phi_{ij}(|v-v_*|,z)\,b_{ij}(\cos\theta). $$

\subsection{State of the art on the multi-species deterministic Boltzmann equation}\label{subsec:2.2}
As already mentioned above, the main difficulty of the deterministic multi-species Boltzmann equation compared to the single-species Boltzmann equation lies in the different conserved quantities: namely, the mass of each species is conserved, while for the momentum and kinetic energy only the sum of all the species is conserved, see \cite{Multi-99, Des05}. Because of this, the proof of an explicit spectral-gap estimate of the linearized single-species operator \cite{Mouhot1} had to be changed significantly in the multi-species framework in \cite{Linearized-Boltz16} by carefully exploiting these new collision invariants. The stability of this spectral-gap estimate around non-equilibrium Maxwellian distributions was studied in \cite{BBBG}. The full Cauchy theory for the 
inhomogeneous Boltzmann equation for mixtures in the perturbative regime was formulated without going to any higher order Sobolev regularity \cite{BD16}, by using the factorization method of \cite{GMM}. 
Besides this, in \cite{BD16} a new multi-species Carleman's representation and a new Povzner-type inequality was proved, due to the loss of symmetry arisen from different masses. 
In \cite{BGPS13, BS}, compactness of one part of the linearized multi-species operator was studied, moreover, in \cite{BGP} it was shown that in the diffusive limit, the multi-species Boltzmann equation converges to the Maxwell-Stefan system. In \cite{BBD18}, the Chapman-Enskog asymptotics for a mixture of gases was presented. 

Finally, we also want to mention the very recent work \cite{Gamba19} on the homogeneous multi-species Boltzmann system, for which it seems to be rather hard to conduct the sensitivity analysis and study the long-time behavior in the UQ setting, since \textcolor{black}{the logarithmic entropy functional cannot be evaluated for the $z$-derivatives of the distribution function, due to their lack of positivity. }

\section{Existence and exponential decay of the solution to the sensitivity system}
\label{sec:3}

This section will discuss the existence of a solution and the exponential decay to global equilibrium of the multi-species Boltzmann equation {\color{black}in the perturbative setting} with random initial data and collision kernel. In the following, we will introduce the same notation and we will use similar techniques as in \cite{BD16}, where the Cauchy theory for the (deterministic) multi-species Boltzmann system was studied. 
Using the ansatz
\begin{equation}\label{PS} F_i(t,x,v,z) = M_i(v) + f_i(t,x,v,z), \end{equation} the equation for ${\bff}=(f_1, \cdots, f_N)$ satisfying the perturbed multi-species Boltzmann equation reads as
\begin{equation}\label{Model} \partial_t {\bf f} + v\cdot \nabla_x {\bf f} = {\bf L}({\bf f}) + {\bf Q}({\bf f}),  \qquad {\bf f}(0,x,v,z)={\bf f_0}(x,v,z), 
\end{equation}
where ${\bf L}=(L_1, \cdots, L_N)$ is the linearized Boltzmann collision operator with its $i$-th ($1\leq i \leq N$) component given by 
$$ L_i({\bff}) = \sum_{j=1}^N L_{ij}(f_i, f_j), \qquad L_{ij}(f_i, f_j) = Q_{ij}(M_i, f_j) + Q_{ij}(f_i, M_j), $$
with $Q_{ij}(\cdot,\cdot)$ defined in \eqref{Q-F.1},
and the nonlinear Boltzmann collision operator ${\bf Q}= (Q_1, \cdots, Q_N)$ is defined in \eqref{Q-F} and \eqref{Q-F.1}.


\subsection{\textcolor{black}{Presentation and discussion} of the main result}
\label{subsec:1}

The proof of the main result of Section \ref{sec:3} uses techniques of \cite[Section 6]{BD16} which rely on the idea of \textcolor{black}{a} nonlinear version of the factorization method of \cite{GMM} presented in \cite{Marc}. 

We first briefly recall some propositions in \cite{BD16} to prepare us for the analysis. 
Define the truncation function $\Theta_{\delta}(v, v^{\ast}, \sigma)\in C^{\infty}(\mathbb R^3 \times \mathbb R^3)$ bounded by $1$ on the set
$$ \left\{ |v|\leq \delta^{-1} \, \text{   and   } 2\delta \leq |v-v^{\ast}| \leq \delta^{-1}  \,  \text{   and   }  |\cos\theta| \leq 1 - 2\delta \right\}, $$
and its support included in the set
$$ \left\{ |v|\leq 2\delta^{-1} \, \text{   and   } \delta \leq |v-v^{\ast}| \leq 2\delta^{-1}  \,  \text{   and   }  |\cos\theta| \leq 1 - \delta \right\}, $$
where $\delta \in (0,1)$ is to be chosen. 
Define the splitting of the linear operator \\ ${\bf G} = (G_1, \cdots, G_i, \cdots, G_N)$ as
\begin{equation}\label{splitting.G}
{\bf G} = {\bf L} - v\cdot\nabla_x = {\bf A}^{(\delta)} + {\bf B}^{(\delta)} - {\boldsymbol{\nu}} - v\cdot\nabla_x, \end{equation}
\textcolor{black}{where ${\boldsymbol{\nu}}=(\nu_1, \cdots, \nu_N)$ is a multiplicative operator called collision frequency, which also depends on the random variable $z$: 
$$\nu_i(v,z) = \sum_{j=1}^N \nu_{ij}(v,z), \qquad
\nu_{ij}(v,z) = C_{ij}^{\Phi}\int_{\mathbb R^3\times\mathbb S^2} b_{ij}(\cos\theta, z) |v-v^{\ast}|^{\gamma} M_i(v^{\ast})\, d\sigma dv^{\ast},$$ }
and the operators ${\bf A}^{(\delta)} = \left(A_i^{\delta}\right)_{1\leq i \leq N}$ and ${\bf B}^{(\delta)} = \left(B_i^{\delta}\right)_{1\leq i \leq N}$ are defined by
\begin{align}
\label{A-B}
\begin{split}
&\displaystyle A_i^{(\delta)}({\bf f}(v,z)) = \sum_{j=1}^N C_{ij}^{\Phi}\int_{\mathbb R^3\times\mathbb S^2} \Theta_{\delta}
(M_j^{\prime\ast}f_i^{\prime} + M_i^{\prime}f_j^{\prime\ast} - M_i f_j^{\ast}) b_{ij}(\cos\theta, z) |v-v^{\ast}|^{\gamma} d\sigma dv^{\ast}, \\[4pt]
&\displaystyle B_i^{(\delta)}({\bf f}(v,z))  = \sum_{j=1}^N C_{ij}^{\Phi}\int_{\mathbb R^3\times\mathbb S^2}  
(1 - \Theta_{\delta}) (M_j^{\prime\ast}f_i^{\prime} + M_i^{\prime}f_j^{\prime\ast} - M_i f_j^{\ast})
b_{ij}(\cos\theta, z) |v-v^{\ast}|^{\gamma} d\sigma dv^{\ast}. 
\end{split}
\end{align}
\textcolor{black}{The results in \cite{BD16}} have shown that ${\bf A}^{(\delta)}$ has some regularizing effects and that 
\begin{equation}{\bf G_1}\DT: = {\bf B}^{(\delta)} - {\boldsymbol{\nu}} - v\cdot \nabla_x, \quad \text{with    }\, 
{\bf G_1}\DT=(G_{1,1}\DT, \cdots, G_{1,i}\DT, \cdots, G_{1,N}\DT) \end{equation} is hypodissipative. Notice that 
\begin{align}\label{splitting.hypo.reg}
{\bf G} = {\bf A}\DT + {\bf G_1}\DT. 
\end{align}
The notation ${\bf \Pi_G}$ is the orthogonal projection onto $\text{Ker}({\bf G})$ in $L^2_{x,v}({\bf M}^{-1/2})$. 

Recall the shorthand notation $$\langle v \rangle = \sqrt{1+|v|^2}\,, $$
and the function spaces that we will use: 
\begin{align}\label{function.spaces} & \|{\bff}\|_{L_{x, v}^{\infty}(\mathbf{W})}=\sum_{i=1}^{N}\left\|f_{i}\right\|_{L_{x, v}^{\infty}\left(W_{i}\right)},  
\quad \left\|f_{i}\right\|_{L_{x, v}^{\infty}\left(W_{i}\right)}=\sup _{(x, v) \in \mathbb{T}^{3} \times \mathbb{R}^{3}}\left(\left|f_{i}(x, v)\right| W_{i}(v)\right), \\[4pt]
& \|{\bff}\|_{L_{v}^{1} L_{x}^{\infty}(\mathbf{W})}=\sum_{i=1}^{N}\left\|f_{i}\right\|_{L_{v}^{1} L_{x}^{\infty}\left(W_{i}\right)}, \quad
\left\|f_{i}\right\|_{L_{v}^{1} L_{x}^{\infty}\left(W_{i}\right)}=\left\|\sup _{x \in \mathbb{T}^{3}}\left|f_{i}(x, v)\right| W_{i}(v)\right\|_{L_{v}^{1}}, \notag
\end{align}
where $\mathbf{W}=\left(W_{1}, \ldots, W_{N}\right) : \mathbb{R}^{3} \rightarrow \mathbb{R}^{+}$ is a strictly positive measurable function in $v$. 
\hspace{1cm}

Denote $\pn f := \partial_z^n f$. 
The following theorem, which is our main result of Section \ref{sec:3}, gives the existence, Sobolev regularity and long-time behavior of the solution in the random space. 
\begin{theorem} 
\label{Prop-Main} 
Under the assumptions (H1)--(H5),
$\exists\, \eta_k$, $C_k$ and $\lambda_k >0$ such that for any $\pn {\bf f_0}\in L_v^1 L_x^{\infty}(\sp)$ satisfying ${\bf \Pi_G}(\pn {\bf f_0})=0$
{\color{black}for all $z$, that is, for $0\leq n \leq r$, 
$$||\pn {\bf f_0}||_{L_v^1 L_x^{\infty}(\sp)}\leq \eta_k, $$}
then there exists $\pn {\bff} \in L_v^1 L_x^{\infty}(\sp)$ satisfying ${\bf \Pi_G}(\pn \bff)=0$ for all $z$, which is a solution to the sensitivity system
\begin{equation}\label{f-der}
\partial_t (\pn f_i) = \pn G_i({\bff}) + \pn Q_i({\bff}), \qquad \pn {\bff}(t=0) = \pn {\bf f_0}\,, \end{equation}
such that for all $z$,
$$ ||\pn {\bff}||_{L_v^1 L_x^{\infty}(\sp)} \leq C_k\, e^{-\lambda_k t}. $$  
As a consequence, ${\pn \bff}$ satisfies for all $z$, 
$$ ||\pn {\bff}||_{L_v^1 L_x^{\infty}(\sp)L_z^{\infty}} \leq C_k\, e^{-\lambda_k t}, $$ 
where the constant $C_k$ depends on the initial data of $\partial^l {\bf f_0}$ for $l=0, \cdots, n$. 
\end{theorem}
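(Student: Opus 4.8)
The plan is to argue by induction on the order $n$ of the $z$-derivative, for $0\le n\le r$, exploiting the factorization splitting $\mathbf{G}=\mathbf{A}^{(\delta)}+\mathbf{G_1}^{(\delta)}$ recalled above. The base case $n=0$ is exactly the deterministic Cauchy theory of \cite{BD16}: for each fixed $z$ the hypotheses (H1)--(H4) reduce to the deterministic assumptions there, so one obtains a solution $\bff$ with $\mathbf{\Pi_G}\bff=0$ and $\|\bff\|_{L_v^1L_x^\infty(\sp)}\le C_0\,e^{-\lambda_0 t}$. The point I would stress is that the constants are uniform in $z$, because (H5) with $k=0$ supplies $z$-uniform kernel bounds matching those of \cite{BD16}.

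For the inductive step I would assume the assertion for all orders $0\le l\le n-1$ and differentiate the perturbed equation \eqref{Model} $n$ times in $z$ via the Leibniz rule. Since the transport part $v\cdot\nabla_x$ and the Maxwellian $\mathbf{M}$ are $z$-independent, differentiation hits only the angular kernel $b_{ij}(\cos\theta,z)$, and one is led to
\[
\partial_t(\pn\bff)=\mathbf{G}(\pn\bff)+\big[\mathbf{Q}(\pn\bff,\bff)+\mathbf{Q}(\bff,\pn\bff)\big]+\mathbf{S}_n,
\]
where $\mathbf{Q}(\cdot,\cdot)$ denotes the bilinear collision operator, the bracketed terms are linear in the top-order derivative $\pn\bff$ with coefficients of size $O(\eta_k)$, and the source $\mathbf{S}_n$ collects all remaining Leibniz terms. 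By construction $\mathbf{S}_n$ depends only on the lower orders $\partial^l\bff$ ($l<n$) and on the kernel derivatives $\partial_z^m b_{ij}$ ($m\le r$), which are bounded uniformly in $z$ by (H5). The essential structural observation is that the same splitting survives differentiation: each $z$-derivative of $\mathbf{A}^{(\delta)}$ remains regularizing and each $z$-derivative of the collision frequency $\boldsymbol{\nu}$ and of $\mathbf{B}^{(\delta)}$ preserves the hypodissipativity of $\mathbf{G_1}^{(\delta)}$, again because only the bounded factor $b_{ij}$ is differentiated.

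I would then write the Duhamel representation for $\pn\bff$ against the semigroup $S_{\mathbf{G}}(t)$ generated by $\mathbf{G}$. The factorization of \cite{GMM}, namely that $\mathbf{G_1}^{(\delta)}$ is hypodissipative and $\mathbf{A}^{(\delta)}$ is bounded and regularizing, yields exponential decay of $S_{\mathbf{G}}(t)$ on the complement of $\mathrm{Ker}(\mathbf{G})$ in the weighted space. The homogeneous term $S_{\mathbf{G}}(t)\,\pn{\bf f_0}$ decays because $\mathbf{\Pi_G}(\pn{\bf f_0})=0$; the bracketed terms are absorbed by the smallness $\eta_k$; and every contribution in $\int_0^t S_{\mathbf{G}}(t-s)\,\mathbf{S}_n(s)\,ds$ is a product of exponentially decaying factors supplied by the induction hypothesis, so its time-convolution with $e^{-\lambda(t-s)}$ again decays exponentially. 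A fixed-point argument in the Banach space of trajectories weighted by $e^{\lambda_k t}$ then produces $\pn\bff$ with the claimed bound; since only finitely many orders $n\le r$ occur, one can fix a single rate $\lambda_k$ strictly below the linear decay rate that accommodates all the convolutions simultaneously, while the constant $C_k$ accumulates the lower-order data of $\partial^l{\bf f_0}$ ($l=0,\dots,n$) through the induction. The orthogonality $\mathbf{\Pi_G}(\pn\bff)=0$ is automatic: since $\mathrm{Ker}(\mathbf{G})$ is spanned by the $z$-independent collision invariants times the fixed $\mathbf{M}$, the projection $\mathbf{\Pi_G}$ commutes with $\partial_z$, whence $\mathbf{\Pi_G}(\pn\bff)=\pn(\mathbf{\Pi_G}\bff)=0$.

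The main obstacle I expect is the simultaneous control of the source $\mathbf{S}_n$ and the preservation of the factorization structure under repeated $z$-differentiation: one must verify that differentiating the kernel neither destroys the regularizing estimate for $\mathbf{A}^{(\delta)}$ nor the dissipativity of $\mathbf{G_1}^{(\delta)}$, and that the induction closes with a rate $\lambda_k$ that does not collapse. This is precisely the \emph{new decomposition built upon the factorization of} \cite{GMM} announced above, and it is where (H5) enters decisively. Finally, because every constant produced along the way depends on the kernel only through the $z$-uniform bound $C_b$ of (H5), all estimates hold uniformly in $z$, and taking the supremum over $z\in I_z$ upgrades the bound to the stated $L_v^1L_x^\infty(\sp)L_z^\infty$ norm.
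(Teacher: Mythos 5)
Your overall architecture --- induction in $n$, base case from \cite{BD16}, Leibniz differentiation producing lower-order sources controlled by (H5), and the observation that $\partial_z^k b_{ij}$ obeys the same bounds as $b_{ij}$ so that the differentiated operators retain the regularizing/hypodissipative properties --- matches the paper. The gap is at the decisive step: you write a single Duhamel formula for $\partial^n{\bf f}$ against the full semigroup $S_{\bf G}(t)$ in the large space $L_v^1L_x^{\infty}(\langle v\rangle^k)$ and close by a fixed point in an $e^{\lambda_k t}$-weighted trajectory space. This does not close as stated, because the bilinear estimate of Lemma \ref{L2} loses a weight: $\|\widetilde Q_i({\bf g},{\bf f})\|_{L_v^1L_x^\infty(\langle v\rangle^k)}$ is controlled by $\|{\bf g}\|_{L_v^1L_x^\infty(\langle v\rangle^k\boldsymbol{\nu})}\|{\bf f}\|_{L_v^1L_x^\infty(\langle v\rangle^k)}$ plus the symmetric term, and $\|{\bf g}\|_{L_v^1L_x^\infty(\langle v\rangle^k\boldsymbol{\nu})}$ is \emph{not} dominated by $\|{\bf g}\|_{L_v^1L_x^\infty(\langle v\rangle^k)}$ for hard potentials. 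The paper recovers this lost weight only through the explicit multiplication semigroup $e^{-\nu_i(v)t}$ acting on the ${\bf g_1}$-component, which yields both the damping $-\|{\bf g_1}\|_{L_v^1L_x^\infty(\langle v\rangle^k\nu_i)}$ in the a priori estimate and the integrated-in-time bound $\int_{s_1}^{t}e^{-\nu_i(v)(s-s_1)}\nu_i(v)\,ds\le 1$ used in \eqref{GE2}. An abstract semigroup $S_{\bf G}(t)$ carries no such pointwise-in-$v$ structure, so the contraction constant in your fixed point cannot be made small by $\eta_k$ alone; this is precisely why the paper (following \cite{Marc, BD16}) does \emph{not} run a single Duhamel argument but instead splits ${\bf g}=\partial^n{\bf f}$ into ${\bf g_1}+{\bf g_2}$, with ${\bf g_1}$ solving \eqref{G1} (driven by the hypodissipative ${\bf G_1}^{(\delta)}$, carrying the nonlinear term and the $B_{b^k}^{(\delta)}$ and $\partial^k\nu_i$ sources) and ${\bf g_2}$ solving \eqref{G2} in the small space $L_{x,v}^\infty(\langle v\rangle^\beta{\bf M}^{-1/2})$, where \cite[Theorem 5.4]{BD16} gives the semigroup decay and the $A^{(\delta)}$, $A_{b^k}^{(\delta)}$ terms act as regularizing sources.

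You name this decomposition in your final paragraph as ``the main obstacle,'' but the proof you sketch never constructs the coupled system \eqref{G1}--\eqref{G2}, never establishes the analogues of Propositions \ref{PG-A} and \ref{PG-B}, and never runs the alternating iteration ${\bf g_1}^{(p)}\mapsto{\bf g_2}^{(p)}\mapsto{\bf g_1}^{(p+1)}$ that produces the solution; these constitute essentially all of the work in the paper's argument. A secondary, smaller point: you must also decide where each new Leibniz term goes (the $A_{b^k}^{(\delta)}$ contributions must be placed in the ${\bf g_2}$ equation to exploit regularization, while $B_{b^k}^{(\delta)}$ and $\partial^k\nu_i\,\partial^{n-k}f_i$ must go with ${\bf g_1}$), and you need the source terms measured in the stronger norm $L_v^1L_x^\infty(\langle v\rangle^k\boldsymbol{\nu})$, which the paper obtains from the induction hypothesis via $\nu_i\sim\langle v\rangle^{\gamma}$; your sketch only invokes decay of the sources in the unweighted norm. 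Your remark that ${\bf \Pi_G}$ commutes with $\partial_z$ is correct and consistent with the paper.
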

\hspace{1cm}

Since we need the following Lemmas given in \cite{BD16} in the proof for the main Theorem \ref{Prop-Main}, we paraphrase them below. 
For each fixed  $z\in I_z$, Lemmas \ref{L0}, \ref{L1} and Lemma \ref{L2} are the same as Lemma 6.2, 6.3 and 6.6 of \cite{BD16}, respectively. 
\begin{lemma}
 \label{L0}
 For any $k$ in $\mathbb N$, $\beta>0$ and $\delta \in (0,1)$, $\exists\, C_A >0$ such that for all ${\bff}$ in 
 $L_v^1 L_x^{\infty}(\sp)$, 
 $$ ||{\bf A}\DT({\bff})||_{L_{x,v}^{\infty}(\langle v \rangle^{\beta}\boldsymbol{M}^{-1/2})} \leq C_A\, ||{\bff}||_{L_v^1 L_x^{\infty}(\sp)}. $$
 \end{lemma}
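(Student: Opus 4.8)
The plan is to exploit the fact that the smooth truncation $\Theta_\delta$ confines the whole integrand to a compact, non-degenerate region of velocity–angle space, on which every weight and every kernel factor is uniformly bounded, thereby reducing the exponentially weighted estimate to an elementary bounded-kernel estimate. First I would record that $\Theta_\delta$ vanishes unless $|v|\le 2\delta^{-1}$, so $A_i^{(\delta)}(\mathbf{f})(x,v)$ is supported in the ball $\{|v|\le 2\delta^{-1}\}$; on that ball the output weight satisfies $\langle v\rangle^{\beta}M_i^{-1/2}(v)\le c_i\,\langle v\rangle^{\beta}e^{|v|^2/4}\le C(\delta,\beta)$, since $M_i^{-1/2}(v)=c_i\,e^{|v|^2/4}$. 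Hence the weighted estimate reduces to the plain bound $\sup_{(x,v)}\bigl|A_i^{(\delta)}(\mathbf{f})(x,v)\bigr|\le C(\delta)\,\|\mathbf{f}\|_{L^1_vL^\infty_x(\langle v\rangle^k)}$. Because the collision operator is local in $x$, I would fix $x$, set $F_j(v):=\sup_x|f_j(x,v)|$, pass the supremum through the integral, and use that $\|\mathbf{f}\|_{L^1_vL^\infty_x(\langle v\rangle^k)}=\sum_j\int F_j(v)\langle v\rangle^k\,dv$. It then suffices, for each fixed $v$ with $|v|\le2\delta^{-1}$, to control the three contributions $M_j'^{*}F_i(v')$, $M_i'F_j(v'^{*})$ and $M_iF_j(v^{*})$ integrated against $\Theta_\delta\,b_{ij}(\cos\theta,z)\,|v-v^{*}|^{\gamma}$.

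The loss term is immediate: on the support of $\Theta_\delta$ one has $|v-v^{*}|\le 2\delta^{-1}$, hence $|v^{*}|\le 4\delta^{-1}$, while $b_{ij}\le C_b$ by (H4), $M_i(v)\le C$, and the $\sigma$-integral over $\mathbb{S}^2$ contributes only a bounded factor. Integrating $F_j(v^{*})$ over the bounded velocity region and using $\langle v^{*}\rangle^{-k}\le 1$ gives a bound by $C(\delta)\int F_j(v^{*})\langle v^{*}\rangle^k\,dv^{*}$. For the two gain terms the only real work is to pass from an integral of $F_i(v')$ (resp.\ $F_j(v'^{*})$) over $(v^{*},\sigma)$ to an integral over the post-collisional velocity itself. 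Here I would invoke the Carleman / collisional change of variables $(v^{*},\sigma)\mapsto v'$, which rewrites the gain part as $\int_{\mathbb{R}^3}\tilde k_{ij}(v,v')\,F_i(v')\,dv'$ for an explicit Carleman kernel $\tilde k_{ij}$. The truncation guarantees that this transformation is performed on the region where $\cos\theta$ stays away from $\pm1$ and $|v-v^{*}|$ stays away from $0$ and $\infty$, so the collisional map is non-degenerate, the support of $\tilde k_{ij}$ is compact (contained in $\{|v'|\le C\delta^{-1}\}$), and $\|\tilde k_{ij}\|_{L^\infty}\le C(\delta)$. Consequently each gain term is bounded by $C(\delta)\int_{\{|v'|\le C\delta^{-1}\}}F_i(v')\,dv'\le C(\delta)\int F_i(v')\langle v'\rangle^k\,dv'$; summing over $j$ and over the three contributions closes the estimate. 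Uniformity of $C_A$ in $z$ is automatic, since all the bounds on $b_{ij}(\cos\theta,z)$ are uniform in $z$ by (H4).

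The step I expect to be the main obstacle is the rigorous justification of the gain-term change of variables and the resulting $L^\infty$ bound on the Carleman kernel $\tilde k_{ij}$: one must verify that the degeneracies of the collisional map — grazing collisions $\cos\theta\to\pm1$ and the singular set $v=v^{*}$ — are exactly those excised by $\Theta_\delta$, so that the transformed kernel inherits a bound depending only on $\delta$, $\gamma$ and $C_b$. Everything else, namely the boundedness of the Maxwellians and of $|v-v^{*}|^{\gamma}$ on the compact support and the reduction of the weighted norm to the unweighted one, is routine once the support restriction imposed by $\Theta_\delta$ has been recorded.
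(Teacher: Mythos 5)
Your proposal is correct and follows essentially the same route as the source the paper relies on: the paper does not reprove this lemma but imports it verbatim from \cite{BD16} (Lemma 6.2), whose argument is exactly the one you sketch — the truncation $\Theta_{\delta}$ confines $v$, $|v-v^{\ast}|$ and $\cos\theta$ to a compact non-degenerate set, so the exponential weight $\langle v\rangle^{\beta}\boldsymbol{M}^{-1/2}$ is harmless, the loss term is elementary, and the gain terms are handled by the (multi-species) Carleman representation, which yields a bounded compactly supported kernel precisely because the cutoff excises $v=v^{\ast}$ and grazing angles. Your identification of the Carleman change of variables as the only delicate step, and of (H4) as the source of uniformity in $z$, matches the cited proof.
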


\begin{lemma}
 \label{L1}
There exists $k_0 \in \mathbb N$ such that for $k \geq k_0$, one can choose $\delta_k >0$ such that 
$0<C_B(k, \delta_k)<1$ and for all ${\bf f} \in L_v^1 L_x^{\infty}(\spnu)$, 
\begin{equation}\label{C-B} ||{\bf B}\DT({\bf f})||_{L_v^1L_x^{\infty}(\sp)} \leq C_B\, ||{\bf f}||_{L_v^1L_x^{\infty}(\spnu)}. 
\end{equation}
\end{lemma}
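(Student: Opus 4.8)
The plan is to follow the Gualdani--Mischler--Mouhot factorization strategy, in the mixture form of \cite{GMM, Marc, BD16}, exploiting that in the splitting \eqref{splitting.G} the \emph{full} loss term $-\boldsymbol{\nu}$ has already been peeled off, so that $B_i^{(\delta)}$ contains only the two gain contributions $M_j^{\prime\ast}f_i^{\prime}$, $M_i^{\prime}f_j^{\prime\ast}$ and the term $-M_i f_j^{\ast}$, each multiplied by the cut-off factor $(1-\Theta_{\delta})\,C_{ij}^{\Phi}\,b_{ij}(\cos\theta)\,|v-v^{\ast}|^{\gamma}$. Since $z$ enters only through $b_{ij}(\cos\theta,z)$, which by (H4) obeys the $z$-independent bound $0<b_{ij}\le C_b|\sin\theta||\cos\theta|$, all estimates below are uniform in $z$, so I fix $z$ and suppress it. First I would reduce the norm by Fubini: write $\|B_i^{(\delta)}({\bf f})\|_{L_v^1 L_x^{\infty}(\langle v\rangle^k)}=\int_{\mathbb R^3}\sup_x|B_i^{(\delta)}({\bf f})(x,v)|\,\langle v\rangle^k\,dv$ and push the supremum in $x$ onto the $f$-factors inside the collision integral, then estimate the three pieces separately.

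For the $-M_i f_j^{\ast}$ term the Maxwellian $M_i(v)$ factors out of the $(\sigma,v^{\ast})$-integral, so after Fubini I must bound, for fixed $v^{\ast}$, the quantity $\int\!\!\int M_i(v)\,(1-\Theta_{\delta})\,b_{ij}(\cos\theta)\,|v-v^{\ast}|^{\gamma}\langle v\rangle^k\,d\sigma\,dv$ by a small multiple of $\langle v^{\ast}\rangle^k\,\nu_j(v^{\ast})$. Here $\langle v\rangle^k M_i(v)$ is integrable, $|v-v^{\ast}|^{\gamma}\lesssim\langle v\rangle^{\gamma}\langle v^{\ast}\rangle^{\gamma}$, and $\nu_j(v^{\ast})\gtrsim\langle v^{\ast}\rangle^{\gamma}$ for hard potentials (bounded below by a positive constant when $\gamma=0$), which produces the required weight $\langle v^{\ast}\rangle^k\nu_j(v^{\ast})$; the smallness is extracted from $(1-\Theta_{\delta})$, whose support forces either $|v|\ge\delta^{-1}$ (super-exponentially small against $M_i$), or $|v-v^{\ast}|\le 2\delta$ resp.\ $\ge\delta^{-1}$ (a factor $\delta^{\gamma}$, an $O(\delta^3)$ ball measure when $\gamma=0$, or a far tail killed by the Gaussian), or the grazing set $|\cos\theta|\ge 1-2\delta$ on which $\int b_{ij}\,d\sigma\to 0$ as $\delta\to 0$ by the cut-off bound in (H4).

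For the two gain terms I would apply the involutive pre/post-collisional change of variables $(v,v^{\ast},\sigma)\mapsto(v^{\prime},v^{\prime\ast},\sigma^{\prime})$ (unit Jacobian) together with the symmetry (H1), which moves the $f$-factor onto the new integration variable and leaves the Gaussian and the weight on a collision kernel. This rewrites each gain term as a kernel operator $\int k_{ij}^{(\delta)}(v,w)\,\sup_x|f_{\bullet}(x,w)|\,dw$, and it then remains to estimate $\int k_{ij}^{(\delta)}(v,w)\,\langle v\rangle^k\,dv$ for fixed $w$ by a small multiple of $\langle w\rangle^k\,\nu(w)$. The Gaussian factor $M_i(v^{\prime})$ or $M_j(v^{\prime\ast})$ again controls the growth of $\langle v\rangle^k$, and the cut-off $(1-\Theta_{\delta})$ again localizes to the grazing and extreme-relative-velocity sets that carry the smallness, exactly as for the previous term. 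This is where the mixture structure enters, since the Carleman-type representation used to build $k_{ij}^{(\delta)}$ must be taken in the multi-species form of \cite{BD16}.

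Summing the three contributions over $1\le i,j\le N$ yields $\|{\bf B}^{(\delta)}({\bf f})\|_{L_v^1 L_x^{\infty}(\langle v\rangle^k)}\le C_B(k,\delta)\,\|{\bf f}\|_{L_v^1 L_x^{\infty}(\langle v\rangle^k\boldsymbol{\nu})}$, with $C_B(k,\delta)$ splitting into a weight-transfer piece from the gain kernels (finite once $k\ge k_0$ makes the $dv$-integrals converge with the correct power) and a truncation piece that tends to $0$ as $\delta\to 0$. I would therefore first fix $k\ge k_0$ so that all weighted integrals are finite and the weight transfer produces a bounded multiple of $\boldsymbol{\nu}$, and then choose $\delta=\delta_k$ small enough that the truncation piece forces $C_B(k,\delta_k)<1$. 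The main obstacle is precisely this last strict inequality: merely bounding ${\bf B}^{(\delta)}$ by $\boldsymbol{\nu}$ is routine, but squeezing the constant \emph{strictly} below $1$ --- which is what makes ${\bf G_1}^{(\delta)}={\bf B}^{(\delta)}-\boldsymbol{\nu}-v\cdot\nabla_x$ hypodissipative in \eqref{splitting.hypo.reg} --- requires the sharp $\delta$-quantified control of the grazing and extreme-velocity regions together with the finiteness threshold $k_0$ of \cite[Section 6]{BD16}; the $z$-uniformity, by contrast, is automatic from (H4).
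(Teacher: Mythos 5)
The paper does not actually prove Lemma \ref{L1}: it is quoted verbatim as Lemma 6.3 of \cite{BD16} (see the sentence preceding Lemma \ref{L0}), the only new observation being that the bounds in (H4) are uniform in $z$, so the deterministic proof applies for each fixed $z$. Your reconstruction of that underlying proof has the right skeleton --- Fubini, separate treatment of the cross term $-M_i f_j^{\ast}$, pre/post-collisional change of variables turning the gain terms into kernel operators, support analysis of $1-\Theta_{\delta}$ --- but it misidentifies the source of the crucial strict inequality $C_B<1$, and as written the final step would fail.

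The gap is this: you attribute the smallness essentially to the truncation (``a truncation piece that tends to $0$ as $\delta\to 0$'') and treat $k_0$ as a finiteness threshold (``finite once $k\ge k_0$ makes the $dv$-integrals converge''). But $\Theta_{\delta}$ is supported in $\{|v|\le 2\delta^{-1},\ \delta\le|v-v^{\ast}|\le 2\delta^{-1}\}$, so in the regime where the $f$-factor sits at a large velocity --- which, after your change of variables, is exactly where the Maxwellian forces $|v-v^{\ast}|>\delta^{-1}$ --- one has $1-\Theta_{\delta}\equiv 1$: no choice of $\delta$ produces any smallness there. Moreover the relevant weighted integrals are finite for \emph{every} $k\ge 0$, since the weight always lands either on a Maxwellian or on the norm being estimated, so finiteness is not what singles out $k_0$. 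What makes the untruncated large-velocity gain contribution carry a constant $<1$ is the weight transfer itself: schematically $\langle v'\rangle^k\approx\langle v\rangle^k\cos^k(\theta/2)$ when $|v|\gg|v^{\ast}|$, and averaging $\cos^k(\theta/2)+\sin^k(\theta/2)$ against the angular kernel yields a Povzner-type constant $\phi(k)$ (equal to $4/(k+2)$ for hard spheres) which is $<1$ precisely when $k>k_0$. This multi-species Povzner inequality of \cite{BD16} --- alluded to in subsection \ref{subsec:2.2} --- is the missing ingredient; the correct structure is $C_B(k,\delta)\le\phi(k)+\varepsilon_k(\delta)$ with $\phi(k)<1$ for $k>k_0$ and $\varepsilon_k(\delta)\to 0$ as $\delta\to 0$, and neither piece can absorb the other. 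Your treatment of the cross term $-M_i f_j^{\ast}$ and of the $z$-uniformity via (H4) is fine.
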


\begin{lemma}
\label{L2}
Define $\widetilde {\bf Q}(\bf f, \bf g)$ by 
$$ \forall 1\leq i \leq N, \qquad \widetilde Q_i({\bf f}, {\bf g}) = \frac{1}{2} \sum_{j=1}^N \left( Q_{ij}(f_i, g_j) + Q_{ij}(g_i, f_j)\right). $$
\textcolor{black}{Then} for all ${\bf f}, {\bf g}$ such that $ \widetilde Q_i({\bf f}, {\bf g})$ \textcolor{black}{is well-defined, the latter} belongs to $\left[\text{Ker}({\bf L})\right]^{\perp}$, \textcolor{black}{and}
$\exists\, C_Q>0$ such that $\forall 1\leq i \leq N$ and each $\bf f$ and $\bf g$, 
\begin{align}
\label{C-Q}
\begin{split}
&\displaystyle ||\widetilde Q_i({\bf f}, {\bf g})||_{L_v^1L_x^{\infty}(\sp)} \leq C_Q \left[ ||f_i||_{L_v^1 L_x^{\infty}(\sp)} 
||{\bf g}||_{L_v^1 L_x^{\infty}(\spnu)} \right. \\[4pt]
&\displaystyle \qquad\qquad\qquad\qquad\qquad\qquad\left.  +
||f_i||_{L_v^1L_x^{\infty}(\nu_i \langle v\rangle^k)} ||{\bf g}||_{L_v^1 L_x^{\infty}(\sp)} \right]. 
\end{split}
\end{align}
\end{lemma}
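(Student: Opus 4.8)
The statement contains two independent assertions: the orthogonality $\widetilde Q_i({\bf f},{\bf g})\in[\mathrm{Ker}({\bf L})]^{\perp}$, and the weighted bilinear bound \eqref{C-Q}. For each fixed $z\in I_z$ both reduce exactly to Lemma~6.6 of \cite{BD16}, so the only genuinely new point in the present setting is that every constant must be \emph{uniform} in $z$; this uniformity is automatic, since the kernel enters all estimates only through the $z$-independent bounds $b_{ij}\le C_b$ from (H4) and the fixed kinetic factor $|v-v^*|^{\gamma}$ from (H3). The plan is therefore to reprove both assertions by the classical weak-formulation argument and by a gain/loss splitting, carefully tracking the polynomial weights.

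For the orthogonality I first recall that $\mathrm{Ker}({\bf L})$ in $L^2_{x,v}({\bf M}^{-1/2})$ is spanned by the collision invariants multiplied by the Maxwellians, namely the vectors $(\delta_{ik}M_i)_i$ for $1\le k\le N$ (per-species masses) together with $(v_\ell M_i)_i$ for $\ell=1,2,3$ (total momentum) and $(|v|^2M_i)_i$ (total energy). Testing $\widetilde Q_i({\bf f},{\bf g})$ against such an element and dividing out $M_i$ reduces the claim to $\sum_i\int_{\mathbb R^3}\widetilde Q_i({\bf f},{\bf g})\,\psi_i\,dv=0$ for every $\boldsymbol\psi$ of the form $\psi_i(v)=a_i+b\cdot v+c|v|^2$. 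I would derive this from the symmetrized weak formulation
$$\sum_{i}\int \widetilde Q_i({\bf f},{\bf g})\,\psi_i\,dv=-\tfrac14\sum_{i,j}\int\!\!\int\!\!\int B_{ij}\,(\cdots)\,\big(\psi_i'+\psi_j^{\prime *}-\psi_i-\psi_j^{*}\big)\,dv\,dv^*\,d\sigma,$$
obtained from the pre/post-collision change of variables together with the symmetry $B_{ij}=B_{ji}$ of (H1). Since any such $\boldsymbol\psi$ satisfies the microscopic conservation $\psi_i(v)+\psi_j(v^*)=\psi_i(v')+\psi_j(v'^*)$ for every admissible collision, the bracket vanishes identically and the integral is zero.

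For the bilinear bound I would first observe that $Q_{ij}$ acts only in $v$ and pointwise in $x$, and that both its gain and loss parts are monotone in $|f|,|g|$; taking the supremum in $x$ therefore reduces \eqref{C-Q} to the weighted $L^1_v$ estimate for $\sup_x|f_i|,\sup_x|g_j|$. I would then split $Q_{ij}=Q_{ij}^{+}-Q_{ij}^{-}$. The loss part $Q_{ij}^{-}(f,g)=f\big(\int B_{ij}g^*\,dv^*d\sigma\big)$ is multiplicative, and since $\int B_{ij}g^*\lesssim\langle v\rangle^{\gamma}\|g\|_{L^1_v(\langle v\rangle^{\gamma})}$, it is dominated by $\|f\|_{L^1_v(\nu_i\sp)}\|g\|_{L^1_v(\sp)}$ using $\gamma\le k$. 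For the gain part I would use the unit-Jacobian involution $(v,v^*)\mapsto(v',v'^*)$ to move the outer weight onto the primed variables, then invoke the Povzner-type inequality $\sp\lesssim\langle v'\rangle^{k}+\langle v'^*\rangle^{k}$ (valid because $|v|^2\le|v'|^2+|v'^*|^2$) together with $\int_{\mathbb S^2}B_{ij}\,d\sigma\lesssim|v-v^*|^{\gamma}\lesssim\langle v\rangle^{\gamma}\langle v^*\rangle^{\gamma}$, producing the symmetric per-pair estimate $\|Q_{ij}(f,g)\|_{L^1_v(\sp)}\lesssim\|f\|_{L^1_v(\sp)}\|g\|_{L^1_v(\nu_j\sp)}+\|f\|_{L^1_v(\nu_i\sp)}\|g\|_{L^1_v(\sp)}$. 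Summing over $j$ sends $\sum_j\|g_j\|$ to the vector norm $\|{\bf g}\|$, and using the symmetry $\widetilde Q_i({\bf f},{\bf g})=\widetilde Q_i({\bf g},{\bf f})$ to recombine the two halves of the symmetrization yields \eqref{C-Q}.

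The main obstacle is the gain-term estimate, specifically the weight redistribution: one must transport the full weight $\sp$ across the collision through the Povzner inequality while gaining exactly one factor of the collision frequency $\nu_i\sim\langle v\rangle^{\gamma}$ from the angular integral, without losing $L^1_v$-integrability. In the genuine multi-species framework \cite{BD16} this step requires the new Carleman representation and the refined Povzner inequality developed there, because unequal molar masses break the collision symmetry and shift the geometry. Under the present normalization $m_i=1$ the geometry coincides with the single-species one, so the factorization computation of \cite{GMM} applies pairwise, and the remaining work is the bookkeeping of the summation over species $j$ and the verification that $C_Q$ is independent of $z$.
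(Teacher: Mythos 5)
Your proposal is correct and follows essentially the same route as the paper's: Lemma \ref{L2} is taken verbatim from \cite[Lemma 6.6]{BD16} for each fixed $z$ (uniformity in $z$ being automatic from the $z$-independent bound $C_b$ in (H4)), and the argument the paper reproduces in Appendix A for the variant $Q_{ij}^{b^{\ell}}$ --- Minkowski's inequality in $L^q_x$ with $q\to\infty$, the pre/post-collisional change of variables on the gain term, the weight inequality $\langle v'\rangle^k\lesssim\langle v\rangle^k\langle v^*\rangle^k$, the bound $|v-v^*|^{\gamma}\le\langle v\rangle^{\gamma}+\langle v^*\rangle^{\gamma}$, and finally $\nu_i\sim\langle v\rangle^{\gamma}$ --- is exactly your gain/loss estimate with the supremum in $x$ taken directly, while your weak-formulation proof of $\widetilde Q_i({\bf f},{\bf g})\in[\mathrm{Ker}({\bf L})]^{\perp}$ is the standard collision-invariant argument also used in \cite{BD16}. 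One small imprecision to note: your final ``recombination via symmetry'' glosses over the fact that the half $\sum_j Q_{ij}(g_i,f_j)$ yields terms of the type $\|g_i\|_{L^1_vL^{\infty}_x(\langle v\rangle^k)}\,\|{\bf f}\|_{L^1_vL^{\infty}_x(\langle v\rangle^k\boldsymbol{\nu})}$, so the bound one actually obtains is the symmetric one (with both $\|f_i\|\,\|{\bf g}\|$-type and $\|g_i\|\,\|{\bf f}\|$-type products, as in \cite{BD16} with vector norms) --- a looseness that the display \eqref{C-Q} as stated in the paper shares, and which is harmless for every use made of the lemma.
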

\textcolor{black}{
The strategy of the proof is to introduce a new adaptation of the factorization method of Gualdani, Mischler and Mouhot \cite{GMM} to our probabilistic setting studied in this paper. 
The core idea is to decompose the full linear operator $\bf G$ (defined in \eqref{splitting.G}) into the hypodissipative operator ${\bf A}\DT$ (see \eqref{splitting.hypo.reg}) and the regularizing operator ${\bf G_1}\DT$ (see \eqref{splitting.hypo.reg}), and to decompose the sensitivity system \eqref{f-der} into a system of equations, such that the hypodissipative and regularizing effects of the operators can be used to obtain the result of Theorem \ref{Prop-Main}.}

\textcolor{black}{The additional challenge here in our framework with uncertainty compared to the deterministic results in \cite{Marc, BD16, GMM} is to find a way of handling the extra high-order derivatives in the random parameter $z$, which naturally appear from the fact that we introduce uncertainty into the model. Thus, the main difference and new challenge in our work compared to all the previous works on the deterministic problem is that a new decomposition, denoted by ${\bf g} = {\bf g_1}+ {\bf g_2}$, for each order $z$-derivative of the distribution function has to be introduced.} \textcolor{black}{One needs to carefully design this new decomposition into the coupled system for ${\bf g_1}$, ${\bf g_2}$ (see equations \eqref{G1}--\eqref{G2}) such that the hypodissipative and regularising properties for the new operators (see the definitions for $A_{b^k}^{(\delta)}$ and  $B_{b^k}^{(\delta)}$ in equation \eqref{New-op}) can be proved and used in a similar way as in the deterministic problems. 
Finally, a suitable induction in the order of $z$-derivatives needs to be applied. }

\textcolor{black}{Compared to the previous work on the sensitivity analysis for a class of (single-species) collisional kinetic equations with multiple scales and random inputs \cite{LJ18}, we want to highlight the following differences in this work: 
First, here we conduct the sensitivity analysis for the multi-species Boltzmann system, while \cite{LJ18} studied a class of single-species kinetic equations, 
including the Boltzmann equation with random initial data and collision kernel. 
Second, here we rigorously prove the existence of solutions to the sensitivity equations, and its exponential decay to the equilibrium in the norm $||\cdot||_{L_v^1 L_x^{\infty}(\sp)L_z^{\infty}}$. }

\subsection{The proof of Theorem \ref{Prop-Main}}
\label{subsec:2}

We shall prove Theorem \ref{Prop-Main} by induction. The deterministic case of $n=0$ is shown in \cite{BD16}. 
Now assume that Proposition \ref{Prop-Main} holds for all $0\leq m\leq n-1$ with $n\geq 1$, we shall prove that the result holds for $m=n$. 

First, one needs to calculate $\partial^n G_i({\bff})$ and $\partial^n Q_i({\bff})$. 
Denote 
\begin{align}
\label{New-op} 
\begin{split}
&\displaystyle A_{b^k, i}\DT(\pl{\bff}) = \sum_{j=1}^N \int_{\mathbb R^3\times\mathbb S^2} \Theta_{\delta}
(M_j^{\prime\ast}\pl f_i^{\prime} + M_i^{\prime}\pl f_j^{\prime\ast} - M_i \pl f_j^{\ast})\, C_{ij}^{\Phi}\, |v-v^{\ast}|^{\gamma}\, \partial^k b_{ij}(\cos\theta, z)\, d\sigma dv^{\ast}, \\[4pt]
&\displaystyle B_{b^k, i}\DT(\pl{\bff})  = \sum_{j=1}^N \int_{\mathbb R^3\times\mathbb S^2}  
(1 - \Theta_{\delta}) (M_j^{\prime\ast}\pl f_i^{\prime} + M_i^{\prime}\pl  f_j^{\prime\ast} - M_i \pl f_j^{\ast})\,C_{ij}^{\Phi}\, |v-v^{\ast}|^{\gamma}\,\partial^k b_{ij}(\cos\theta, z)\, d\sigma dv^{\ast}. 
\end{split}
\end{align}
Compared with $A\DT$, $B\DT$ shown in \eqref{A-B}, the only difference in $A_{b^k}\DT$, $B_{b^k}\DT$ is that one replaces the angular part of the kernel to be $\partial^k b_{ij}$ here instead of $b_{ij}$.
The $n$-order $z$-derivative of the ${\bf G}$ operator is given by 
\begin{align}
\label{G-n}
\begin{split}
\displaystyle \pn G_i({\bff}) &= \pn A_i\DT({\bff}) + \pn B_i\DT({\bff})  - \pn (\nu_i f_i) - v\cdot\nabla_x (\pn f_i) \\[4pt]
\displaystyle&= A_i\DT(\pn {\bff}) + B_i\DT(\pn {\bff}) - \nu_i \, \pn f_i - v\cdot\nabla_x (\pn f_i)  \\[4pt]
\displaystyle& \quad +\sum_{k=1}^n \nk \left[ A_{b^k, i}\DT(\partial^{n-k}{\bff}) + B_{b^k,i}\DT(\partial^{n-k}{\bff}) - \partial^k \nu_i \,\partial^{n-k}f_i \right]  \\[4pt]
\displaystyle& = A_i\DT(\pn{\bff}) + G_{1,i}\DT(\pn{\bff}) + \sum_{k=1}^n \nk \left[ A_{b^k, i}\DT(\partial^{n-k}{\bff}) + B_{b^k,i}\DT(\partial^{n-k}{\bff}) - \partial^k \nu_i\, \partial^{n-k}f_i \right]{\textcolor{black}.}
\end{split}
\end{align}
Denote $$Q_{ij}^{b^k}(f_i, f_j) = \int_{\mathbb R^3\times \mathbb S^2} C_{ij}^{\Phi}\, |v-v^{\ast}|^{\gamma}\,\partial^k b_{ij}(\cos\theta, z)\, (f_i^{\prime}f_j^{\prime\ast} - 
f_i f_j^{\ast}) d\sigma dv^{\ast}. $$
Then the $n$-order $z$-derivative of the collision operator $Q_{ij}$ is
\begin{align*}
\displaystyle \pn Q_{ij}(f_i, f_j)& = \sum_{l=0}^n \nl \int_{\mathbb R^3\times \mathbb S^2}  \partial^{n-l} B_{ij}
\sum_{m=0}^l \lm \left(\partial^m f_i^{\prime}\,\partial^{l-m}f_j^{\prime\ast} - \partial^m f_i \,\partial^{l-m}f_j^{\ast}\right) d\sigma dv^{\ast} \\[4pt]
\displaystyle &= \sum_{l=0}^n \sum_{m=0}^l \nl \lm Q_{ij}^{b^{n-l}}(\partial^m f_i, \partial^{l-m}f_j) \\[4pt]
\displaystyle & = \sum_{l=0}^{n-1}\sum_{m=0}^l \nl \lm Q_{ij}^{b^{n-l}}(\partial^m f_i, \partial^{l-m}f_j)  + 
\sum_{m=1}^{n-1}\nm Q_{ij}(\partial^m f_i, \partial^{n-m}f_j)  \\[4pt]
\displaystyle & \quad + Q_{ij}(f_i, \pn f_j) + Q_{ij}(\pn f_i, f_j), 
\end{align*}
thus 
\begin{align}
\label{Q-n}
\begin{split}
&\displaystyle\quad \pn Q_i (f_i, f_j) = \sum_{j=1}^N \pn Q_{ij}(f_i, f_j) \\[4pt]
&\displaystyle= \underbrace{\sum_{j=1}^N \sum_{l=0}^{n-1}\sum_{m=0}^l \nl \lm Q_{ij}^{b^{n-l}}(\partial^m f_i, \partial^{l-m}f_j) 
+ \sum_{j=1}^N \sum_{m=1}^{n-1}\nm Q_{ij}(\partial^m f_i, \partial^{n-m}f_j)}_{\text{Term}\, \ostar} + 2 \widetilde Q_i(\pn{\bff}, {\bff}). 
\end{split}
\end{align}

Combine \eqref{f-der}, \eqref{G-n} and \eqref{Q-n}, then ${\bf g} := \pn \bff$ satisfies for each $z$ the equation 
\begin{align}\label{G-eqn} 
\begin{split}
\partial_t g_i  & =  
{\textcolor{black}G_i({\bf g})}
+ \sum_{k=1}^n \nk \left[ A_{b^k, i}\DT(\partial^{n-k}{\bff}) + B_{b^k,i}\DT(\partial^{n-k}{\bff}) - \partial^k \nu_i\, \partial^{n-k}f_i \right] \\[6pt] & \quad + 2 \widetilde Q_i({\bfg}, {\bff}) + \text{Term}\, \ostar, 
\qquad {\bf g}(0,x,v,z) = {\bf g_0}(x,v,z) = \pn {\bf f_0}(x,v,z). 
\end{split}
\end{align}

\hspace{1cm}

\noindent {\bf Decomposition: }
In the form of ${\bf g} = {\bf g_1} + {\bf g_2}$ with ${\bf g_1} \in L_v^1 L_x^{\infty}(\sp)$ and 
${\bf g_2} \in L_{x,v}^{\infty}(\langle v \rangle^{\beta}{\bf \mu}^{-1/2})$, then $({\bf g_1}, {\bf g_2})$ satisfy the following system of equations
\begin{align}
\label{G1}\partial_t g_{1,i} & = G_{1,i}^{(\delta)}({\bf g_1})  + \sum_{k=1}^n \nk \left[ B_{b^k,i}\DT(\partial^{n-k}{\bff}) - \partial^k \nu_i\, \partial^{n-k}f_i \right] \\[6pt]
&\quad + 2 \widetilde Q_i({\bf g_1} + {\bf g_2}, {\bff}) + \text{Term}\, \ostar,  \qquad {\bf g_1}(0,x,v,z) = {\bf g_0}(x,v,z), \notag \\[8pt]
\label{G2}\partial_t  g_{2,i} &= G_i({\bf g_2}) + A_i^{(\delta)}({\bf g_1}) + \sum_{k=1}^n \nk A_{b^k, i}\DT(\partial^{n-k}{\bff}), 
\qquad {\bf g_2}(0,x,v,z)=0. 
\end{align}
The above decomposition of the solution ${\bf g}={\bf g_1}+{\bf g_2}$ follows \cite{BD16}, which also adopted the idea in \cite{GMM} for the single-species Boltzmann equation. Compared to the deterministic case studied in \cite{BD16}, the differences here are the last three terms on the right-hand-side of \eqref{G1}, 
which appear due to the uncertainty dependence, 
 and the last term on the right-hand-side of \eqref{G2}. They need to be grouped properly in the equation for ${\bf g_1}$ or ${\bf g_2}$. 

\hspace{2cm}

First, we show a simple Lemma: 
\begin{lemma}
\label{Q-I}
Denote 
$$\chi_n = 
\left\{\begin{array}{ll} 1, & n \, \text{ is even} \\ 
0, & n \, \text{ is odd}
\end{array}\right. 
$$
One can write 
\begin{equation}\label{Q-eqn} \sum_{j=1}^N\sum_{m=1}^{n-1}\nm Q_{ij}(\partial^m f_i, \partial^{n-m}f_j) = 
2 \sum_{k=1}^{\lfloor\frac{n-1}{2}\rfloor}\nm \widetilde Q_i (\partial^m {\bff}, \partial^{n-m}{\bff}) 
+ \chi_n \binom{n}{\frac n 2}\widetilde Q_i (\partial^{\frac n 2}{\bff},\partial^{\frac n 2}{\bff}). \end{equation}
Also, we have the estimate for $0\leq \ell \leq n$, 
\begin{equation}\label{Q-eqn2} \sum_{j=1}^N  || Q_{ij}^{b^{\ell}}(f_i,  g_j)||_{L_v^1 L_x^{\infty}(\sp)}
\leq \widetilde C_Q \left[ ||f_i||_{L_v^1 L_x^{\infty}(\sp)} ||{\bfg}||_{L_v^1 L_x^{\infty}(\sp \boldsymbol{\nu})} + ||f_i||_{L_v^1 L_x^{\infty}(\nu_i\sp)} ||{\bf g}||_{L_v^1 L_x^{\infty}(\sp)}\right].  \end{equation}
\end{lemma}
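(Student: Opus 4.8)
The plan is to treat the two assertions separately, since \eqref{Q-eqn} is a purely combinatorial rearrangement while \eqref{Q-eqn2} is an analytic estimate that reduces to Lemma~\ref{L2}.

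For the identity \eqref{Q-eqn} I would exploit the binomial symmetry $\binom{n}{m}=\binom{n}{n-m}$ together with the definition of the symmetrized operator $\widetilde{Q}_i$. The idea is to pair the summation index $m$ with $n-m$ in $\sum_{m=1}^{n-1}\binom{n}{m}Q_{ij}(\partial^m f_i,\partial^{n-m}f_j)$. For each unordered pair $\{m,n-m\}$ with $m\neq n-m$, summing over $j$ and using $\binom{n}{m}=\binom{n}{n-m}$ collapses the two contributions $\binom{n}{m}Q_{ij}(\partial^m f_i,\partial^{n-m}f_j)+\binom{n}{n-m}Q_{ij}(\partial^{n-m}f_i,\partial^m f_j)$ into $2\binom{n}{m}\widetilde{Q}_i(\partial^m{\bff},\partial^{n-m}{\bff})$, directly from the definition of $\widetilde{Q}_i$. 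As $m$ ranges over $1,\dots,n-1$ these pairs are indexed by $m=1,\dots,\lfloor\frac{n-1}{2}\rfloor$, which accounts for the first sum on the right-hand side (the summation index printed as $k$ should read $m$). When $n$ is even the self-paired index $m=n/2$ is left over; for it one has $\widetilde{Q}_i(\partial^{n/2}{\bff},\partial^{n/2}{\bff})=\sum_j Q_{ij}(\partial^{n/2}f_i,\partial^{n/2}f_j)$, producing the isolated term weighted by $\chi_n\binom{n}{n/2}$. A short parity check confirms the count $\lfloor\frac{n-1}{2}\rfloor$ of pairs in both cases and that the middle term is present exactly when $n$ is even, i.e.\ exactly when $\chi_n=1$.

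For the estimate \eqref{Q-eqn2} I would reduce to the already available Lemma~\ref{L2}. The operator $Q_{ij}^{b^{\ell}}$ differs from $Q_{ij}$ only in that the angular kernel $b_{ij}(\cos\theta,z)$ is replaced by its $\ell$-th $z$-derivative $\partial^{\ell}_z b_{ij}(\cos\theta,z)$; its kinetic part $C_{ij}^{\Phi}|v-v^*|^{\gamma}$ and its bilinear structure $f_i^{\prime}g_j^{\prime\ast}-f_i g_j^{\ast}$ are unchanged. The crucial point is assumption (H5), which guarantees $|\partial^{\ell}_z b_{ij}|\le C_b$ for all $0\le \ell\le r$, the very same pointwise bound on the angular kernel that is used in the proof of Lemma~\ref{L2} (Lemma~6.6 of \cite{BD16}). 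Since that proof only invokes the upper bound $C_b$ on $b_{ij}$ and never its sign or any finer structure, it applies verbatim with $b_{ij}$ replaced by $\partial^{\ell}_z b_{ij}$, yielding the stated bound with a constant $\W C_Q$ that can be taken equal to (or a fixed multiple of) $C_Q$, uniformly in $0\le\ell\le r$ and in $z$.

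The main obstacle I anticipate is bookkeeping rather than analysis: I must check that the proof of Lemma~\ref{L2} in fact controls the non-symmetrized sum $\sum_{j}Q_{ij}(f_i,g_j)$ appearing on the left of \eqref{Q-eqn2}, and not merely the symmetrized combination $\widetilde{Q}_i$ in which the two arguments are interlaced. Inspecting the bound \eqref{C-Q} this is plausible, because its right-hand side already separates the first argument $f_i$ from the full second argument $\bfg$ with no cross terms; this indicates that the underlying estimate is genuinely one for $\sum_j Q_{ij}(f_i,g_j)$, so the symmetrization in the statement of Lemma~\ref{L2} is only a convenient repackaging. Once that is confirmed, no new analytic input beyond (H5) is required.
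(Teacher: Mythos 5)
Your proposal is correct and follows essentially the same route as the paper: the identity \eqref{Q-eqn} is obtained by pairing $m$ with $n-m$ via $\binom{n}{m}=\binom{n}{n-m}$ (the paper does this by splitting the summation range and substituting $k'=n-k$, with the middle term isolated when $n$ is even), and the estimate \eqref{Q-eqn2} is obtained by rerunning the proof of Lemma \ref{L2} (Lemma 6.6 of \cite{BD16}) with $b_{ij}$ replaced by $\partial_z^{\ell}b_{ij}$, using only the uniform bound $|\partial_z^{\ell}b_{ij}|\le C_b$ from (H5). Your closing concern is resolved exactly as you anticipate: the paper's appendix estimates the non-symmetrized $Q_{ij}^{b^{\ell}}(f_i,g_j)$ directly via Minkowski's inequality and the change of variables $(v,v^*)\to(v',v'^*)$, so no symmetrization is needed.
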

The proof is given in the Appendix. By Lemma \ref{L2}, \eqref{Q-eqn} implies that 
\begin{align}  
&\quad\left|\left| \sum_{j=1}^N \sum_{m=1}^{n-1} \nm Q_{ij}(\partial^m f_i , \partial^{n-m}f_j)\right|\right|_{L_v^1 L_x^{\infty}(\sp)}\notag \\[2pt]
& \leq 2 C_Q \sum_{k=1}^{\lfloor\frac{n-1}{2}\rfloor}\nm \left[ ||\partial^{m}f_i||_{L_v^1 L_x^{\infty}(\sp)} ||\partial^{n-m}{\bff}||_{L_v^1 L_x^{\infty}(\spnu)}
 + ||\partial^{m}f_i||_{L_v^1 L_x^{\infty}(\nu_i\sp)} ||\partial^{n-m}{\bff}||_{L_v^1 L_x^{\infty}(\sp)}\right] \notag\\[2pt]
&\label{Q-ij}\quad + \chi_n \binom{n}{\frac n 2} C_Q  \left[ ||\partial^{\frac n 2}f_i||_{L_v^1 L_x^{\infty}(\sp)} ||\partial^{\frac n 2}{\bff}||_{L_v^1 L_x^{\infty}(\spnu)}+ ||\partial^{\frac n 2}f_i||_{L_v^1 L_x^{\infty}(\nu_i\sp)} ||\partial^{\frac n 2}{\bff}||_{L_v^1 L_x^{\infty}(\sp)}\right]. 
\end{align}

In ``Term $\ostar$", the second term is exactly the left-hand-side of \eqref{Q-eqn}. 
By using the assumption \eqref{b-Assump} and Lemma \ref{Q-I}, the first term is estimated by
{\footnotesize\begin{align*}
& \quad\left|\left| \sum_{j=1}^N \sum_{l=0}^{n-1}\sum_{m=0}^l \nl \lm Q_{ij}^{b^{n-l}}(\partial^m f_i, \partial^{l-m}f_j) \right|\right|_{L_v^1 L_x^{\infty}(\sp)} \\[4pt]
& \leq \sum_{l=0}^{n-1} \nl \left\{\sum_{j=1}^N \sum_{m=1}^{l-1} \lm  \left|\left| Q_{ij}^{b^{n-l}}(\partial^m f_i, \partial^{l-m}f_j) \right|\right|_{L_v^1 L_x^{\infty}(\sp)}
+ \left|\left|\sum_{j=1}^N \left(Q_{ij}(f_i, \partial^l f_j) + Q_{ij}(\partial^l f_i, f_j)\right)\right|\right|_{L_v^1 L_x^{\infty}(\sp)} \right\}  \\[4pt]
& \leq 
\sum_{l=0}^{n-1} \nl \left\{\sum_{m=1}^{l-1}\lm \widetilde C_Q\left[ ||\partial^m f_i ||_{L_v^1 L_x^{\infty}(\sp)} ||\partial^{l-m}{\bff}||_{L_v^1 L_x^{\infty}(\sp\boldsymbol{\nu})} +  ||\partial^m f_i ||_{L_v^1 L_x^{\infty}(\sp\nu_i)} ||\partial^{l-m}{\bff}||_{L_v^1 L_x^{\infty}(\sp)}\right] \right.  \\[4pt]
&\left. \qquad\qquad\quad + 2 || \widetilde Q_i({\bff}, \partial^l{\bff})||_{L_v^1 L_x^{\infty}(\sp)} \right\}. 
\end{align*}}
Thus ``Term $\ostar$" can be bounded by 
{\small\begin{align}
\label{Star}
\begin{split}
&\quad ||\text{Term}\, \ostar||_{L_v^1 L_x^{\infty}(\sp)} \\[4pt]
&\leq \sum_{l=0}^{n-1} \nl \left\{ \sum_{m=1}^{l-1}\lm \widetilde C_Q\left[ ||\partial^m f_i ||_{L_v^1 L_x^{\infty}(\sp)} ||\partial^{l-m}{\bff}||_{L_v^1 L_x^{\infty}(\sp\boldsymbol{\nu})} +  ||\partial^m f_i ||_{L_v^1 L_x^{\infty}(\sp\nu_i)}||\partial^{l-m}{\bff}||_{L_v^1 L_x^{\infty}(\sp)}\right]\right.  \\[4pt]
&\left. \qquad\qquad\quad + 2 ||\widetilde Q_i({\bff}, \partial^l{\bff})||_{L_v^1 L_x^{\infty}(\sp)} \right\} \\[4pt]
&\quad + 2 \sum_{m=1}^{\lfloor\frac{n-1}{2}\rfloor}\nm ||\widetilde Q_i (\partial^m {\bff}, \partial^{n-m}{\bff})||_{L_v^1 L_x^{\infty}(\sp)}
+ \chi_n\binom{n}{\frac n 2}||\widetilde Q_i (\partial^{\frac n 2}{\bff},\partial^{\frac n 2}{\bff})||_{L_v^1 L_x^{\infty}(\sp)}.  
\end{split}
\end{align}}
 
 Another thing we would like to mention before starting the main steps of the proof: 
\cite[Proposition 6.1 and at the end of section 6.1.2]{BD16} shows that the solution ${\bf f}$ is small in the following sense, and one can assume that
\begin{equation}\label{F}\int_0^t ||{\bf f}||_{L_v^1L_x^{\infty}(\spnu)}ds \leq \tau_1, \qquad
 ||{\bf f}||_{L_t^{\infty}L_v^1L_x^{\infty}(\sp)} \leq \tau_2, \end{equation} 
 where $\tau_1$, $\tau_2$ are constants depending on the initial data $||{\bf f_0}||_{L_v^1L_x^{\infty}(\sp)}$ and an exponential decay factor 
$e^{-\lambda_k t}$. 

\subsubsection{Step 1: discussion for ${\bf g_1}$}
In a similar spirit as \cite[Proposition 6.7]{BD16}, we will show that 
\begin{proposition}
\label{PG-A}
\textcolor{black}{Let $k>k_0$,} and for all $z$, let ${\bf g_0}\in L_v^1L_x^{\infty}(\sp)$ satisfy ${\bf \Pi_{G}(g_0)}=0$ and 
${\bf h} = {\bf h}(t,x,v,z) \in L_t^{\infty}L_v^1L_x^{\infty}(\spnu)$. Moreover, let $\tau_1$, $\tau_2$ in \eqref{F} be small enough such that
\begin{equation}\label{Assump}\max \{ 4 C_Q \tau_1, 2(C_B + 2 C_Q \tau_2) \} <1. 
\end{equation}
Then there exists $\eta_1$, $\lambda_1$ such that 
for all $z$, if 
$$ ||{\bf g_0}||_{L_v^1L_x^{\infty}(\sp)} \leq \eta_1, \quad \text{and   } \exists C,\lambda \quad \text{such that   }\quad   
||{\bf h}(t, z)||_{L_v^1L_x^{\infty}(\spnu)} \leq C\, ||{\bf g_0}||_{L_v^1 L_x^{\infty}(\sp)} \, e^{-\lambda t}, $$
then there exists a function ${\bf g_1}$ in $L_t^{\infty}L_v^1L_x^{\infty}(\sp)$ such that for $1\leq i \leq N$, 
\begin{align}
\begin{split}\label{G1-1} 
\partial_t g_{1,i} & = G_{1,i}^{(\delta)}({\bf g_1})  + \sum_{\ell=1}^n \binom{n}{\ell} \left[ B_{b^{\ell},i}\DT(\partial^{n-\ell}{\bff}) - \partial^{\ell} \nu_i\, \partial^{n-\ell}f_i \right] \\[6pt]
&\quad + 2 \widetilde Q_i({\bf g_1} + {\bf h}, {\bff}) + \text{Term}\, \ostar,  \qquad {\bf g_1}(0,x,v,z) = {\bf g_0}(x,v,z). \end{split}
\end{align}
In addition, for all $z$, solution ${\bf g_1}$ satisfies for all \textcolor{black}{$t\geq 0$}
$$ ||{\bf g_1}(t, z)||_{L_v^1L_x^{\infty}(\sp)} \leq C_1\, e^{-\lambda_1 t}. $$
The constants $C_1$, $\eta_1$ and $\lambda_1$ depend on $n$, $k$ and the collision kernel. $C_1$ also depends on the initial data ${\bf g_0}$ and $\partial^k {\bf f_0}$ for $1 \leq k \leq n$. The constants 
$C_B$, $C_Q$, $\tau_1$, $\tau_2$ are shown in \eqref{C-B}, \eqref{C-Q} and \eqref{F}, respectively. 
\end{proposition}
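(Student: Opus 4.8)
The plan is to exploit that equation \eqref{G1-1} is \emph{linear} in $\mathbf{g_1}$: since $\widetilde Q_i$ is bilinear and $\bff$ is given, the term $2\widetilde Q_i(\mathbf{g_1},\bff)$ is linear in $\mathbf{g_1}$, while $2\widetilde Q_i(\mathbf{h},\bff)$, Term $\ostar$, and the correction $\sum_{\ell=1}^n\binom{n}{\ell}\bigl[B_{b^{\ell},i}\DT(\partial^{n-\ell}\bff)-\partial^{\ell}\nu_i\,\partial^{n-\ell}f_i\bigr]$ act as inhomogeneous sources. I would construct $\mathbf{g_1}$ as the unique solution of this linear evolution---obtained by a Picard iteration on the Duhamel formula for the hypodissipative generator $G_1\DT$---and then prove exponential decay through an a priori differential inequality for $\|\mathbf{g_1}(t)\|_{L_v^1L_x^{\infty}(\sp)}$.

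\textbf{Dissipation and absorption of the linear coupling.} Differentiating the norm and using that $G_1\DT$ is hypodissipative (as established in \cite{BD16}), the $-\boldsymbol{\nu}$ contribution dominates $B\DT$ by Lemma \ref{L1}, producing a net dissipation $-(1-C_B)\,\|\mathbf{g_1}\|_{L_v^1L_x^{\infty}(\spnu)}$ with $C_B<1$. The linear coupling $2\widetilde Q_i(\mathbf{g_1},\bff)$ is estimated by Lemma \ref{L2} together with $\|\bff\|_{L_t^{\infty}L_v^1L_x^{\infty}(\sp)}\le\tau_2$ from \eqref{F}, giving a bound $2C_Q\tau_2\,\|\mathbf{g_1}\|_{L_v^1L_x^{\infty}(\spnu)}$. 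Precisely the balance $2(C_B+2C_Q\tau_2)<1$ in \eqref{Assump} keeps the net coefficient in front of $\|\mathbf{g_1}\|_{L_v^1L_x^{\infty}(\spnu)}$ strictly negative after this absorption.

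\textbf{Sources controlled by the induction hypothesis.} Every inhomogeneous term reduces to strictly lower-order $z$-derivatives. The operators $B_{b^{\ell},i}\DT$ differ from $B_i\DT$ only through the replacement of $b_{ij}$ by $\partial^{\ell}b_{ij}$, which by (H5) obeys the same bound $C_b$; hence Lemma \ref{L1} applies verbatim, and, together with the control of $\partial^{\ell}\nu_i$, bounds these terms by norms of $\partial^{n-\ell}\bff$ with $1\le\ell\le n$. Term $\ostar$ is controlled by the estimate \eqref{Star}, again through products of lower-order derivatives, and $2\widetilde Q_i(\mathbf{h},\bff)$ by Lemma \ref{L2} using the hypotheses on $\mathbf{h}$ and $\bff$. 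By the induction hypothesis (Theorem \ref{Prop-Main} for $0\le m\le n-1$) all these lower-order quantities decay exponentially, so the collected source satisfies $S(t)\le C\,e^{-\lambda_{*}t}$ for some $\lambda_{*}>0$.

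\textbf{Closing the estimate; the main obstacle.} Combining the two preceding steps yields $\frac{d}{dt}\|\mathbf{g_1}\|_{L_v^1L_x^{\infty}(\sp)}\le-\kappa\,\|\mathbf{g_1}\|_{L_v^1L_x^{\infty}(\spnu)}+S(t)$ with $\kappa>0$; since the hard-potential collision frequency is bounded below by a positive constant $\nu_0$, one has $\|\cdot\|_{L_v^1L_x^{\infty}(\spnu)}\ge\nu_0\|\cdot\|_{L_v^1L_x^{\infty}(\sp)}$, and a Gronwall argument against the exponentially small source gives $\|\mathbf{g_1}(t)\|_{L_v^1L_x^{\infty}(\sp)}\le C_1e^{-\lambda_1 t}$. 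The main obstacle I anticipate is that the linearized operator is \emph{time-dependent} through $\bff(t)$, so no fixed semigroup is available and the rate $\lambda_1$ must be chosen uniformly and compatibly with every exponential rate inherited from the induction. This is exactly where both smallness constants in \eqref{F} are indispensable: the sup-norm bound $\tau_2$ secures the instantaneous dissipation through \eqref{Assump}, while the time-integrated bound $\tau_1$ (via the remaining requirement $4C_Q\tau_1<1$) bounds the growth of the Duhamel propagator generated by the $\bff$-dependent perturbation, ensuring that the desired uniform exponential decay persists.
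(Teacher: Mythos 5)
Your proposal is correct and follows essentially the same route as the paper: an a priori differential inequality for $\|{\bf g_1}\|_{L_v^1L_x^{\infty}(\langle v\rangle^k)}$ closed by Gr\"onwall (using hypodissipativity, Lemmas \ref{L1}--\ref{L2}, the observation that (H5) lets $B_{b^{\ell}}^{(\delta)}$ inherit the bound \eqref{C-B}, and the induction hypothesis to make the source exponentially decaying), followed by a Picard/Duhamel iteration for existence. The only slight imprecision is in where \eqref{Assump} enters: the dissipation step of the a priori estimate needs only $C_B+2C_Q\tau_2<1$, while the full condition --- both the factor of $2$ and the companion requirement $4C_Q\tau_1<1$ --- is used to make the iteration a contraction in the combined quantity $\sup_{s\in[0,t]}\|\cdot\|_{L_v^1L_x^{\infty}(\langle v\rangle^k)}+\int_0^t\|\cdot\|_{L_v^1L_x^{\infty}(\langle v\rangle^k\boldsymbol{\nu})}\,ds$, which is how the paper obtains the Cauchy sequence.
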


\begin{proof}
{\bf Step (i): a priori exponential decay.}\, This part follows the main structure of \cite[Proof of Proposition 6.7, page 1430]{BD16} thus we omit some details. Using that the transport \textcolor{black}{part} gives null contribution and multiplicative part gives a negative contribution, similar to \cite[inequality (6.13)]{BD16}, one gets
\begin{align*}
& \quad\frac{d}{dt}||g_{1,i}||_{L_v^1 L_x^{\infty}(\sp)}  \leq - ||g_{1,i}||_{L_v^1 L_x^{\infty}(\sp\nu_i)} + 
\sum_{\ell=1}^n \binom{n}{\ell} ||\partial^{n-\ell}f_i ||_{L_v^1L_x^{\infty}(\sp \nu_i)}
+ ||{\bf B}_i^{(\delta)}({\bf g}_{1})||_{L_v^1 L_x^{\infty}(\sp)}  \\[2pt]
& + \sum_{\ell=1}^n \binom{n}{\ell} ||B_{b^{\ell},i}\DT(\partial^{n-\ell}{\bff})||_{L_v^1L_x^{\infty}(\sp)} + 2 ||\widetilde Q_i({\bf g_1} + {\bf h}, {\bff})||_{L_v^1 L_x^{\infty}(\sp)} + ||\text{Term}\, \ostar ||_{L_v^1 L_x^{\infty}(\sp)}. 
\end{align*}
In analogy to Lemma \ref{L1}, due to our assumption $|\partial_z^{\ell}b_{ij}| \leq C_b$ in \eqref{b-Assump}, then
\begin{equation}\label{New-B} ||B_{b^{\ell},i}\DT(\partial^{n-\ell}{\bff})||_{L_v^1L_x^{\infty}(\sp)} \leq C_B\, ||\partial^{n-\ell}{\bff}||_{L_v^1 L_x^{\infty}(\sp \boldsymbol{\nu})}, \end{equation}
where the same $C_B$ is generated as in the deterministic case satisfying $0< C_B < 1$. 
This is due to our assumptions \eqref{bb} and \eqref{b-Assump} that $b_{ij}$ and 
$|\partial_z^k b_{ij}|$ share the same upper bound $C_b$, which determines $C_B$ in Lemma \ref{C-B}. 
We use Lemma \ref{L1} to control ${\bf B}\DT$, \eqref{New-B} to control ${\bf B}\DT_{b^{\ell}}$ and Lemma \ref{L2} to control $\widetilde Q_i$. 
Using estimate \eqref{Q-ij}, one has
\begin{align*}  
&\quad\frac{d}{dt}||{\bf g_1}||_{L_v^1 L_x^{\infty}(\sp)} \\[4pt]
&\leq - \left[  1 - C_B - 2 C_Q  ||{\bf f}||_{L_v^1 L_x^{\infty}(\sp)}\right] ||{\bf g_1}||_{L_v^1 L_x^{\infty}(\spnu)} 
+ \sum_{\ell=1}^n \binom{n}{\ell} ||\partial^{n-\ell}f_i ||_{L_v^1L_x^{\infty}(\langle v \rangle^k \nu_i)} \\[4pt]
&\quad  + \sum_{\ell=1}^n \binom{n}{\ell} C_B\, ||\partial^{n-\ell}{\bff}||_{L_v^1 L_x^{\infty}(\langle v \rangle^k\boldsymbol{\nu})}
+ 2 C_Q \left[ ||{\bf g_1}||_{L_v^1 L_x^{\infty}(\sp)} ||{\bf f}||_{L_v^1 L_x^{\infty}(\spnu)} \right. \\[4pt]
&\qquad\quad + \left. ||{\bf h}||_{L_v^1L_x^{\infty}(\sp)} ||{\bf f}||_{L_v^1 L_x^{\infty}(\spnu)} + ||{\bf h}||_{L_v^1 L_x^{\infty}(\spnu)} ||{\bf f}||_{L_v^1 L_x^{\infty}(\sp)}\right] \\[4pt]
 &\quad + \text{``RHS of \eqref{Star}"}, 
\end{align*} 
where the last term ``RHS of \eqref{Star}" is bounded by products of lower-order (up to $(n-1)$-th) partial $z$-derivatives of ${\bff}$, according to Lemma \ref{L2}, \textcolor{black}{Lemma \ref{Q-I} and \eqref{Q-ij}}. 
Since $C_B<1$, $||{\bf g_0}||_{L_v^1 L_x^{\infty}(\spnu)}$ is sufficiently small, due to the exponential decay of 
$||{\bf h}(t)||_{L_v^1L_x^{\infty}(\spnu)}$, smallness of ${\bff}$ shown in \cite{BD16} and smallness conditions for all $||\partial^m {\bff}||_{L_v^1 L_x^{\infty}(\sp)}$ ($1\leq m \leq n-1$) assumed by induction, the Gr\"onwall's lemma yields the exponential decay of $||{\bf g_1}||_{L_v^1 L_x^{\infty}(\sp)}$. 
Note that $\nu_i$ is equivalent to $\langle v \rangle^{\gamma}$, thus 
$||\partial^m {\bff}||_{L_v^1 L_x^{\infty}(\sp \boldsymbol{\nu})}$ ($1\leq m \leq n-1$) is also small. 
\\[2pt]

\noindent {\bf Step (ii): existence.}\, Let ${\bf g_1}^{({\bf {0}})} = 0$ and consider the following iteration on equation \eqref{G1-1} with 
$p\in \mathbb N$: 
\begin{align} 
\label{g1-N}
\begin{split}
 \partial_t g_{1,i}^{(p+1)} + v\cdot\nabla_x g_{1,i}^{(p+1)} = & - \nu_i(v) (g_{1,i}^{(p+1)}) + 
B_i({\bf g_1}^{(p)}) + 2\widetilde Q_i({\bf g_1}^{(p)} + {\bf h}, {\bff})  \\[4pt]
& + \sum_{\ell=1}^n \binom{n}{\ell} \left[ B_{b^{\ell},i}(\partial^{n-\ell}{\bff}) - \partial^{\ell} \nu_i\, \partial^{n-\ell}f_i \right]
+ \text{Term } \ostar, 
\end{split}
\end{align}
with the initial data ${\bf g_1}^{(p+1)}(0,x,v,z) = {\bf g_0}$. We omit including the superscript $\delta$ in $ {\bf B}^{(\delta)}$ here. 
Note that in \eqref{g1-N}, the last two terms on the right-hand-side do not involve the time iteration index $p$ of the scheme. 
Our goal is to show that $\left({\bf g_1}^{(p)}\right)_{p\in \mathbb N}$ is a Cauchy sequence in $L_t^{\infty}L_v^1L_x^{\infty}(\sp)$. 

By the Duhamel formula along the characteristics for all $i$, 
\begin{align}\label{Duh1}
\begin{split} g_{1,i}^{(p+1)}(t,x,v,z) = & e^{-\nu_i(v) t} g_{0, i} + \int_0^t e^{-\nu_i(v)(t-s)} \Big\{ B_i({\bf g_1}^{(p)})  +  
2 \widetilde Q_i({\bf g_1}^{(p)}+ {\bf h}, {\bf f}) \\[2pt]
&\qquad\qquad\quad + \sum_{\ell=1}^n \binom{n}{\ell} \left[ B_{b^{\ell},i}(\partial^{n-\ell}{\bff}) - \partial^{\ell} \nu_i\, \partial^{n-\ell}f_i \right]
+ \text{Term} \ostar \Big\} (x-sv, v)\, ds, 
\end{split}
\end{align}
where $g_{0,i}(x,v,z)$ is the $i$-th component of the initial data ${\bf g_0}$. 
Similarly we write
\begin{align}\label{Duh2}
\begin{split} g_{1 i}^{(p)}(t,x,v,z) =  & e^{-\nu_i(v) t} g_{0, i} + \int_0^t e^{-\nu_i(v)(t-s)} \Big\{ B_i({\bf g_1}^{(p-1)}) 
 + 2 \widetilde Q_i({\bf g_1}^{(p-1)}+ {\bf h}, {\bf f}) \\[2pt]
 &\qquad\qquad\quad + \sum_{\ell=1}^n \binom{n}{\ell} \left[ B_{b^{\ell},i}(\partial^{n-\ell}{\bff}) - \partial^{\ell} \nu_i\, \partial^{n-\ell}f_i \right]
+ \text{Term} \ostar \Big\} (x-sv, v)\, ds. 
 \end{split}
\end{align}
Since we are in the case of hard potentials and Maxwellian molecules, we know that $\nu_i(v)\geq \nu_0\color{black}{>0}$. 
Subtract (\ref{Duh2}) from (\ref{Duh1}), take the $L_v^1L_x^{\infty}(\sp)$-norm of $({\bf g_1}^{(p+1)} - {\bf g_1}^{(p)})$ and 
sum over $i$, by using the relation $$ \widetilde{\bf Q}({\bf g_1}^{(p)}+ {\bf h}, {\bf f}) - \widetilde {\bf Q}({\bf g_1}^{(p-1)}+ {\bf h}, {\bf f})= 
\widetilde{\bf Q}({\bf g_1}^{(p)} - {\bf g_1}^{(p-1)}, {\bf f}), $$ 
one gets for each $z$, 
\begin{align}
\label{GE1}
\begin{split}
&\displaystyle \left|\left|{\bf g_1}^{(p+1)}(t) - {\bf g_1}^{(p)}(t)\right|\right|_{L_v^1L_x^{\infty}(\sp)} \\[4pt]
\displaystyle \leq &\int_0^t e^{-\nu_0 (t-s)}\, \left|\left|{\bf B}({\bf g_1}^{(p)} - {\bf g_1}^{(p-1)}) 
+ 2 \widetilde {\bf Q}({\bf g_1}^{(p)} - {\bf g_1}^{(p-1)}, {\bf f})\right|\right|_{L_v^1L_x^{\infty}(\sp)} \, ds \\[4pt]
\displaystyle \leq &\left[ C_B + 2 C_Q ||{\bf f}||_{L_t^{\infty}L_v^1L_x^{\infty}(\sp)} \right] 
\int_0^t e^{-\nu_0(t-s)} \left|\left| {\bf g_1}^{(p)}(s) - {\bf g_1}^{(p-1)}(s) \right|\right|_{L_v^1L_x^{\infty}(\spnu)} ds \\[4pt]
&\displaystyle + 2 C_Q \int_0^t ||{\bf f}||_{L_v^1L_x^{\infty}(\spnu)} ds \cdot 
\sup_{s\in [0,t]} \left|\left|{\bf g_1}^{(p)}(s) - {\bf g_1}^{(p-1)}(s)\right|\right|_{L_v^1L_x^{\infty}(\sp)}, 
\end{split}
\end{align}
where Lemma \ref{L1} and Lemma \ref{L2} on estimates of the operator ${\bf B}$ and $\widetilde{\bf Q}$ are used. 

On the other hand, 
\begin{align}
&\displaystyle \int_0^t \left|\left|{\bf g_1}^{(p+1)}(s) - {\bf g_1}^{(p)}(s)\right|\right|_{L_v^1L_x^{\infty}(\spnu)} ds  \notag\\[4pt]
\displaystyle \leq &\sum_{i}\int_0^t \int_0^s \int_{\mathbb R^3} e^{-\nu_i(v)(s-s_1)}\nu_i(v) \sp  \notag\\[4pt]
\displaystyle &\qquad\qquad \cdot \left|\left|{\bf B}({\bf g_1}^{(p)} - {\bf g_1}^{(p-1)}) 
+ 2 \widetilde {\bf Q}({\bf g_1}^{(p)} - {\bf g_1}^{(p-1)}, {\bf f})\right|\right|_{L_x^{\infty}}(s_1) ds_1 ds  \notag\\[4pt]
\displaystyle = &\sum_{i}\int_0^t \int_{\mathbb R^3} \left(\int_{s_1}^t e^{-\nu_i(v)(s-s_1)}\nu_i(v) ds\right) \sp  \notag\\[4pt]
\displaystyle &\qquad\qquad \cdot \left|\left|{\bf B}({\bf g_1}^{(p)} - {\bf g_1}^{(p-1)}) 
+ 2 \widetilde {\bf Q}({\bf g_1}^{(p)} - {\bf g_1}^{(p-1)}, {\bf f})\right|\right|_{L_x^{\infty}}(s_1) ds_1 \notag\\[4pt]
\displaystyle \leq & \left[ C_B + 2 C_Q\, ||{\bf f}||_{L_t^{\infty}L_v^1L_x^{\infty}(\sp)} \right]  
\int_0^t \left|\left| {\bf g_1}^{(p)}(s_1) - {\bf g_1}^{(p-1)}(s_1) \right|\right|_{L_v^1L_x^{\infty}(\spnu)} ds_1 \notag\\[4pt]
&\label{GE2}\displaystyle + 2 C_Q \int_0^t ||{\bf f}||_{L_v^1L_x^{\infty}(\spnu)}ds_1 \cdot
\sup_{s\in [0,t]} \left|\left|{\bf g_1}^{(p)}(s) - {\bf g_1}^{(p-1)}(s)\right|\right|_{L_v^1L_x^{\infty}(\sp)}, 
\end{align}
where we used the fact that the integral in $s$ is bounded by 1; exchanged the integration domains in $s$ and $s_1$, 
and used Lemma \ref{L1} and Lemma \ref{L2} again. 

Adding up (\ref{GE1}) and (\ref{GE2}), by using (\ref{F}), one has
 \begin{align*}
 &\displaystyle  \left|\left|{\bf g_1}^{(p+1)}(t) - {\bf g_1}^{(p)}(t)\right|\right|_{L_v^1L_x^{\infty}(\sp)}  
 + \int_0^t \left|\left|{\bf g_1}^{(p+1)}(s) - {\bf g_1}^{(p)}(s)\right|\right|_{L_v^1L_x^{\infty}(\spnu)} ds \\[4pt]
 \displaystyle \leq &  4 C_Q \tau_1 \cdot \sup_{s\in [0,t]} \left|\left|{\bf g_1}^{(p)}(s) - {\bf g_1}^{(p-1)}(s)\right|\right|_{L_v^1L_x^{\infty}(\sp)} \\[4pt]
 &\displaystyle + 2 (C_B + 2 C_Q \tau_2) \int_0^t \left|\left| {\bf g_1}^{(p)}(s) - {\bf g_1}^{(p-1)}(s) \right|\right|_{L_v^1L_x^{\infty}(\spnu)} ds. 
 \end{align*}
Assumption \eqref{Assump} indicates that $\left({\bf g_1}^{(p)}\right)_{p\in \mathbb N}$ is a Cauchy sequence 
in $L_t^{\infty}L_v^1 L_x^{\infty}(\sp)$. Thus $\left({\bf g_1}^{(p)}\right)_{p\in \mathbb N}$ converges to a function 
${\bf g_1}$ in $L_t^{\infty}L_v^1 L_x^{\infty}(\sp)$. 
 \end{proof}
 
\subsubsection{Step 2: discussion for ${\bf g_2}$}
 As for ${\bf g_2}$, it satisfies the linear equation (\ref{G2}), which is in a similar form as \cite[equation (6.3)]{BD16} except for the last term involving lower order $z$-derivatives of ${\bff}$. We thereby mimic \cite[Proposition 6.8]{BD16} and get the following: 
\begin{proposition}
\label{PG-B}
Let ${\bf h} = {\bf h}(t,x,v,z)$ be in $L_t^{\infty}L_v^1L_x^{\infty}(\sp)$, if 
${\bf \Pi_{G}(g_2 + h) = 0}$ and for all $z$, 
$||{\bf h}(t, z)||_{L_v^1L_x^{\infty}(\sp)} \leq \eta_h\, e^{-\lambda_h t}$, 
then there exists a unique function ${\bf g_2} \in L_t^{\infty}L_{x,v}^{\infty}
(\langle v \rangle^{\beta}{\bf M}^{-1/2})$ to 
\begin{equation}\label{G2-2} \partial_t  g_{2,i} = G_i({\bf g_2}) + A_i\DT({\bf h}) + \sum_{k=1}^n \nk A_{b^k, i}\DT(\partial^{n-k}{\bff}), \qquad {\bf g_2}(0,x,v,z)=0. 
\end{equation}
Moreover, $\exists$ some constants $C_2>0$, $\lambda_2>0$ such that for all $z$, 
$$ ||{\bf g_2}(t, z)||_{L_{x,v}^{\infty}(\langle v \rangle^{\beta}{\bf M}^{-1/2})} \leq C_2 \, \eta_h\, e^{-\lambda_2 t}, $$
where $C_2$ depends on the initial data of $\partial^k {\bf f_0}$ for $1\leq k \leq n$. 
\end{proposition}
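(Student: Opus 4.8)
The plan is to read \eqref{G2-2} as a \emph{linear} inhomogeneous equation driven by the full linearized operator ${\bf G}={\bf L}-v\cdot\nabla_x$ with a prescribed source, and to combine two structural facts that are already at our disposal. The first is the exponential decay of the semigroup $S_{\bf G}(t)$ generated by ${\bf G}$ on the strong space $\mathcal{E}:=L_{x,v}^{\infty}(\langle v \rangle^{\beta}{\bf M}^{-1/2})$, restricted to $[\mathrm{Ker}\,{\bf G}]^{\perp}$; this is exactly the factorization/spectral-gap statement established in \cite{BD16} (after \cite{GMM}), so I would \emph{import} it rather than reprove it: there are $C,\lambda_{\bf G}>0$ with $\|S_{\bf G}(t)(\mathrm{Id}-{\bf \Pi_G}){\bf u}\|_{\mathcal{E}}\le C e^{-\lambda_{\bf G}t}\|{\bf u}\|_{\mathcal{E}}$. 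The second is the regularizing gain of ${\bf A}\DT$ and of the new operators ${\bf A}_{b^k}\DT$, which map the weak space $L_v^1 L_x^{\infty}(\sp)$ into $\mathcal{E}$. Writing the source componentwise as \[ \mathcal{S}_i(s):=A_i\DT({\bf h})+\sum_{k=1}^n \nk\, A_{b^k,i}\DT(\partial^{n-k}{\bff}), \] the whole argument reduces to controlling $\mathcal{S}$ in $\mathcal{E}$ and feeding it into a Duhamel formula.

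First I would record the analogue of Lemma \ref{L0} for ${\bf A}_{b^k}\DT$. Since by (H5) the differentiated angular kernels obey $|\partial_z^k b_{ij}|\le C_b$, which is the \emph{same} bound used for $b_{ij}$ in Lemma \ref{L0}, its proof applies verbatim and gives $\|{\bf A}_{b^k}\DT({\bf u})\|_{\mathcal{E}}\le C_A\|{\bf u}\|_{L_v^1 L_x^{\infty}(\sp)}$ with the identical constant $C_A$. Together with Lemma \ref{L0} this yields \[ \|\mathcal{S}(s)\|_{\mathcal{E}}\le C_A\Big(\|{\bf h}(s)\|_{L_v^1 L_x^{\infty}(\sp)}+\sum_{k=1}^n \nk\,\|\partial^{n-k}{\bff}(s)\|_{L_v^1 L_x^{\infty}(\sp)}\Big), \] so that the source \emph{lives in the strong space} even though ${\bf h}$ and the lower-order $z$-derivatives only live in the weak one. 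Using the hypothesis $\|{\bf h}(s)\|_{L_v^1 L_x^{\infty}(\sp)}\le \eta_h e^{-\lambda_h s}$ together with the induction hypothesis of Theorem \ref{Prop-Main}, which provides exponential decay of $\|\partial^m{\bff}(s)\|_{L_v^1 L_x^{\infty}(\sp)}$ for every $0\le m\le n-1$ and hence for all the $\partial^{n-k}{\bff}$ with $1\le k\le n$, I obtain $\|\mathcal{S}(s)\|_{\mathcal{E}}\le C\,e^{-\mu s}$ for some $\mu>0$, where $C$ depends on $\eta_h$ and on the initial data $\partial^k{\bf f_0}$, $1\le k\le n$.

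Existence and uniqueness then follow from the Duhamel representation ${\bf g_2}(t)=\int_0^t S_{\bf G}(t-s)\mathcal{S}(s)\,ds$: the integrand lies in $\mathcal{E}$ and is integrable in $s$ (the semigroup is bounded on $\mathcal{E}$ and $\mathcal{S}$ decays), which defines a unique ${\bf g_2}\in L_t^{\infty}\mathcal{E}$ solving \eqref{G2-2} with ${\bf g_2}(0)=0$, uniqueness being immediate since a difference of solutions solves the homogeneous equation with zero data. For the decay I would split ${\bf g_2}=(\mathrm{Id}-{\bf \Pi_G}){\bf g_2}+{\bf \Pi_G}{\bf g_2}$. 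Because ${\bf \Pi_G}$ records the (conserved) collision invariants it commutes with the flow, so ${\bf G}{\bf \Pi_G}={\bf \Pi_G}{\bf G}=0$ and the non-kernel part solves $\partial_t(\mathrm{Id}-{\bf \Pi_G}){\bf g_2}={\bf G}(\mathrm{Id}-{\bf \Pi_G}){\bf g_2}+(\mathrm{Id}-{\bf \Pi_G})\mathcal{S}$ with zero data; hence $(\mathrm{Id}-{\bf \Pi_G}){\bf g_2}(t)=\int_0^t S_{\bf G}(t-s)(\mathrm{Id}-{\bf \Pi_G})\mathcal{S}(s)\,ds$, and inserting the imported semigroup decay and the source bound, the elementary integral $\int_0^t e^{-\lambda_{\bf G}(t-s)}e^{-\mu s}\,ds$ produces decay at rate $\min(\lambda_{\bf G},\mu)$ (with a harmless polynomial factor if the two rates coincide). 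For the kernel part I would use precisely the standing hypothesis ${\bf \Pi_G}({\bf g_2}+{\bf h})=0$, which forces ${\bf \Pi_G}{\bf g_2}=-{\bf \Pi_G}{\bf h}$; as $\mathrm{Ker}\,{\bf G}$ is finite-dimensional, all norms on it are equivalent and $\|{\bf \Pi_G}{\bf g_2}(t)\|_{\mathcal{E}}\lesssim\|{\bf h}(t)\|_{L_v^1 L_x^{\infty}(\sp)}\le\eta_h e^{-\lambda_h t}$. Summing the two contributions gives a bound of the asserted form $\|{\bf g_2}(t)\|_{\mathcal{E}}\le C_2\,\eta_h\,e^{-\lambda_2 t}$ with $\lambda_2=\min(\lambda_{\bf G},\mu,\lambda_h)$, the lower-order source contributions being absorbed into $C_2$, which thus depends on $\partial^k{\bf f_0}$, $1\le k\le n$, as claimed.

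The genuinely hard ingredient — the exponential decay of $S_{\bf G}$ on $\mathcal{E}$ over $[\mathrm{Ker}\,{\bf G}]^{\perp}$ — is not new and is taken from \cite{BD16}. I therefore expect the only real obstacle to be the verification that the extra source $\sum_{k=1}^n \nk\, A_{b^k,i}\DT(\partial^{n-k}{\bff})$ produced by $z$-differentiation both lands in the strong space $\mathcal{E}$ and decays exponentially in time; this is exactly what the ${\bf A}_{b^k}\DT$-analogue of Lemma \ref{L0} delivers for the spatial gain and what the induction hypothesis of Theorem \ref{Prop-Main} delivers for the temporal decay, so the obstacle is manageable once these two pieces are in place.
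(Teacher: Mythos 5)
Your proposal is correct and follows essentially the same route as the paper: the paper likewise writes ${\bf g_2}$ via the Duhamel formula ${\bf g_2}=\int_0^t S_{\bf G}(t-s)\bigl[{\bf A}\DT({\bf h})+\sum_{k=1}^n\binom{n}{k}{\bf A}_{b^k}\DT(\partial^{n-k}{\bff})\bigr]ds$ with the semigroup imported from \cite[Theorem 5.4]{BD16}, uses Lemma \ref{L0} and its ${\bf A}_{b^k}\DT$ analogue (valid since $\partial_z^k b_{ij}$ obeys the same bound as $b_{ij}$ by (H5)) to place the source in the strong space, and invokes the decay of ${\bf h}$ plus the induction hypothesis on lower-order $z$-derivatives for the temporal decay. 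Your extra detail on splitting off the kernel part via ${\bf \Pi_G}({\bf g_2}+{\bf h})=0$ merely fills in what the paper leaves to the citation of \cite{BD16}.
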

\noindent
The proof is similar to \cite{BD16}, so we omit most details. \cite[Theorem 5.4]{BD16} implies that there is a unique solution 
${\bf g_2}$ to the differential system \eqref{G2-2}, given by 
$${\bf g_2} = \int_0^t S_{{\bf G}}(t-s)\left[ {\bf A}\DT({\bf h})(s) + \sum_{k=1}^n \nk {\bf A_{b^k}}\DT(\partial^{n-k}{\bff})(s)\right] ds, $$
where $S_{{\bf G}}(t)$ is the semigroup generated by ${\bf G}$ in $L_{x,v}^{\infty}(\langle v \rangle^{\beta}{\bf M}^{-1/2})$; ${\bf A}\DT$ and ${\bf A_{b^k}}\DT$ are vector operators with the $i$-th component $A_i\DT$ and $A_{b^k,i}\DT$ ($1\leq i \leq N$). 
We use the regularising property of ${\bf A}\DT$ operator given in Lemma \ref{L0}, and similarly for ${\bf A_{b^k}}\DT$ due to that 
$\partial_z^k b_{ij}$ follows the same assumption as $b_{ij}$. The exponential decay of ${\bf h}$ and all the lower order $z$-derivatives of ${\bff}$, i.e., $||\partial^k {\bff}|_{L_v^1L_x^{\infty}(\sp)}$ ($0\leq k \leq n-1$) is used, by the assumption for ${\bf h}$ in this proposition and by induction. 

\subsubsection{Step 3: discussion for ${\bf g}$ and final result}
We prove the existence of the solution ${\bf g}$ by an iterative scheme. We start with ${\bf g_1}^{({\bf 0})} = {\bf g_2}^{({\bf 0})} = 0$ and approximate the 
system of equations (\ref{G1})--(\ref{G2}) as follows ($1\leq i \leq N$):
\begin{align*}
\partial_t g_{1,i}^{(p+1)} & = G_{1,i}^{(\delta)}({\bf g_1}^{(p+1)})  + 2 \widetilde Q_i({\bf g_1}^{(p+1)} + {\bf g_2}^{(p)}, {\bff}) \\[4pt]
&\quad + \sum_{k=1}^n \nk \left[ B_{b^k,i}\DT(\partial^{n-k}{\bff}) - \partial^k \nu_i\, \partial^{n-k}f_i \right] + \text{Term}\, \ostar,   \\[6pt]
\partial_t  g_{2,i}^{(p+1)} &= G_i({\bf g_2}^{(p+1)}) + A_i^{(\delta)}({\bf g_1}^{(p+1)}) + \sum_{k=1}^n \nk A_{b^k, i}\DT(\partial^{n-k}{\bff}), 
\end{align*}
with the initial data 
$$ {\bf g_1}^{(p+1)}(0,x,v,z) = {\bf g_0}(x,v,z), \qquad {\bf g_2}^{(p+1)}(0,x,v,z) =0, $$
where ${\bf g_0}\in L_v^1 L_x^{\infty}(\sp)$ satisfies ${\bf \Pi_{G}(g_0)}=0$ for all $z$. 
Recall that ${\bf f_0}\in L_v^1 L_x^{\infty}(\sp)$ with ${\bf \Pi_{G}(f_0)}=0$, since ${\bfg} = \partial_z^n {\bff}$, the initial 
condition for ${\bf g_0}$ holds automatically. 

By Proposition \ref{PG-A} and Proposition \ref{PG-B}, $\left({\bf g_1}^{(p)}\right)_{p\in \mathbb N}$ and 
$\left({\bf g_2}^{(p)}\right)_{p\in \mathbb N}$ are well-defined sequences. 
By induction, we claim that for all $p\in \mathbb N$ and all $t\geq 0$ and each $z \in I_z$, 
\begin{align}
\label{G1-N}\left|\left|{\bf g_1}^{(p)}(t,z)\right|\right|_{L_v^1 L_x^{\infty}(\sp)} & \leq \widetilde C_1\, e^{-\lambda_1 t}, 
\\[4pt]
\label{G2-N}\left|\left|{\bf g_2}^{(p)}(t,z)\right|\right|_{ L_{x,v}^{\infty}( \langle v \rangle^{\beta}{\boldsymbol \mu}^{-1/2})} 
& \leq \widetilde C_2\, e^{-\lambda_2 t}.
\end{align}
If we construct ${\bf g_1}^{(p)}$ and ${\bf g_2}^{(p)}$ satisfying the exponential decay above, then we can obtain ${\bf g_1}^{(p+1)}$ 
from Proposition \ref{PG-A} by letting ${\bf h} = {\bf g_2}^{(p)}$ in equation (\ref{G1-1}) and then construct ${\bf g_2}^{(p+1)}$ with Proposition \ref{PG-B} by letting 
${\bf h} = {\bf g_1}^{(p+1)}$ in equation (\ref{G2-2}). Finally, we have the equality for $1\leq i \leq N$, 
\begin{align*}
 \partial_t \left(g_{1,i}^{(p+1)} + g_{2,i}^{(p+1)}\right) = &\, G_i\left({\bf g_1}^{(p+1)} + {\bf g_2}^{(p+1)}\right) + 
2 \widetilde Q_i({\bf g_1}^{(p+1)} + {\bf g_2}^{(p)}, {\bff}) \\[4pt]
& + \sum_{k=1}^n \nk \left[ A_{b^k, i}\DT(\partial^{n-k}{\bff}) + B_{b^k,i}\DT(\partial^{n-k}{\bff}) - \partial^k \nu_i\, \partial^{n-k}f_i \right] + \text{Term}\, \ostar. 
\end{align*}

In conclusion, for each $z$, $\left({\bf g_1}^{(p)} \right)_{p\in \mathbb N}$ is a Cauchy sequence in $L_t^{\infty}L_v^1 L_x^{\infty}(\sp)$ and converges 
strongly towards a function ${\bf g_1}$. By (\ref{G2-N}), the sequence $\left({\bf g_2}^{(p)}\right)_{p\in \mathbb N}$ is bounded and weakly-$^{\ast}$ converges, up to a subsequence, towards ${\bf g_2}$ in $L_t^{\infty}L_{x,v}^{\infty}(\langle v \rangle^{\beta}{\boldsymbol \mu}^{-1/2})$. 
This implies that $({\bf g_1}, {\bf g_2})$ is solution to the system (\ref{G1})--(\ref{G2}) and ${\bf g} = {\bf g_1} + {\bf g_2}$ is a solution to equation (\ref{G-eqn}) satisfying ${\bf \Pi_{G}(g)}=0$. Moreover, taking the limit inside the exponential decays \eqref{G1-N}  and \eqref{G2-N}, one
concludes that for all $z$, 
$$ ||{\bf g}||_{L_v^1 L_x^{\infty}(\sp)} \leq C e^{-\lambda t}\, ||{\bf g_0}||_{L_v^1 L_x^{\infty}(\sp)}. $$

Recall the notation ${\bfg}=\partial_z^n {\bff}$. We now conclude that 
$$ ||\partial_z^n {\bff}||_{L_v^1 L_x^{\infty}(\sp)L_z^{\infty}} \leq C\, e^{-\lambda t}\, ||\partial_z^n {\bf f_0}||_{L_v^1 L_x^{\infty}(\sp)L_z^{\infty}}, $$
where $C$, $\lambda$ are generic constants that depend on $N$, $k$, collision kernels, initial data of ${\bff}$ and $\partial_z^k {\bff}$ ($1\leq k \leq n$). 

We showed that Proposition \ref{Prop-Main} is true for $m=n$ ($1\leq n \leq r$) by induction, one concludes that 
the result in Proposition \ref{Prop-Main} holds for all $n=0, \cdots, r$, 
where $r$ is associated to the regularity of the initial data ${\bf f_0}$ in the random space.

\section{Spectral gap of the linearized gPC Galerkin system}
\label{sec:4}

In this part, we generalize the single-species gPC-SG system to the multi-species gPC-SG system
by adapting the idea from the proof of the multi-species H-theorem \cite{Des05} and in particular for the Boltzmann model \cite{Linearized-Boltz16}, combined with the previous work considering the uncertainty \cite{LJ18, DJL}. 
We consider in this Section the case of random initial data and random collision kernel, where the distribution of the {\textcolor{black}one-dimensional} random variable $z$ is given by $\pi(z)$. 

The same notation and perturbative setting are followed as that in \cite{Linearized-Boltz16, DJL}. 
Denote $$M_i(v) = \frac{\rho_{\infty,i}}{(2\pi)^{3/2}} e^{- \frac{|v|^2}{2}}, \qquad 1\leq i \leq N. $$
Assume that the distribution function $F_i$ is close to the global equilibrium such that we can write 
\begin{equation}\label{F_Ans}F_i = M_i + M_i^{1/2}f_i, \end{equation}
for some perturbation function $f_i$. 

Plug in the ansatz (\ref{F_Ans}) into (\ref{model}), then $f_i$ satisfies the equation 
\begin{equation}\label{LE}\partial_t f_i + v\cdot\nabla_x f_i = \uwave {L_i}(f) + \uwave{Q_i}(f), \end{equation}
where 
$\uwave {L_i}(f)=\sum_{l=1}^N \uwave {L_{il}}(f_i, f_l)$, with 
\begin{align}
&\displaystyle \uwave{L_{il}}(f_i, f_l)=M_i^{-1/2}\left(Q_{il}(M_i, M_l^{1/2}f_l)+ Q_{il}(M_i^{1/2}f_i, M_l)\right) \notag\\[4pt]
&\displaystyle \label{L_il}\qquad\qquad = \int_{\mathbb R^3\times \mathbb S^2} B_{il}M_i^{1/2}M_l^{\ast}(h_i^{\prime} + h_l^{\prime\ast}
-h_i - h_l^{\ast})\, dv^{\ast}d\sigma, \qquad h_i:=M_i^{-1/2}f_i, 
\end{align}
and $$\uwave{Q_i}(f) = \sum_{l=1}^N M_i^{-1/2}\, Q_{il}(M_i^{1/2}f_i, M_l^{1/2}f_l). $$

It has been shown in \cite{Linearized-Boltz16} that the linearized Boltzmann system (\ref{LE}) satisfies the H-theorem with the linearized entropy
$H(f) = \frac{1}{2}\sum_{i=1}^N \int_{\mathbb R^3} f_i^2\, dv$, that is, 
$$ - \frac{dH}{dt} = -\sum_{i=1}^N \int_{\mathbb R^3} f_i L_i(f)\, dv =: - (f, L(f))_{L^2_v} \geq 0, $$
where $(\cdot, \cdot)_{L^2_v}$ is the scalar product on $L^2_v = L^2(\mathbb R^3; \mathbb R^n)$. 

\begin{remark}
Note that the linearization \eqref{F_Ans} is different from 
\eqref{PS}, with the extra factor $M_i^{1/2}$. The reason is that we will extend the spectral gap analysis from the single-species case studied in \cite{DJL} to the 
multi-species Boltzmann system, thus it is better to follow the same perturbative setting as in \cite{DJL}. 
\end{remark}

One can approximate the distribution for the $i$-th species $f_i$ (or $h_i$) by using the ansatz
\begin{align}
&\displaystyle  f_i(t,x,v,z)\approx f_i^K(t,x,v,z) := {\textcolor{black} \sum_{k=1}^{K}\, f_{i, k}(t,x,v)\psi_k(z)},  \notag\\[2pt]
&\displaystyle \label{gPC-Ans} h_i(t,x,v,z) \approx h_i^K(t,x,v,z) := {\textcolor{black} \sum_{k=1}^{K}\, h_{i,k}(t,x,v)\psi_k(z)}.  
\end{align}

By inserting the ansatz (\ref{gPC-Ans}) into the linearized equation (the linear part of equation (\ref{LE})) 
$$\partial_t f_i + v\cdot\nabla_x f_i = L_i(f), $$
and conducting a standard Galerkin projection, one obtains the following gPC-SG system for $f_{i,k}$ (with $1\leq i \leq N$, $1\leq k \leq K$): 
\begin{equation}\label{gPC}\partial_t f_{i,k} + v\cdot\nabla_x f_{i,k} = \langle L_i(f^K), \psi_k\rangle_{L^2(\pi(z))}. \end{equation}

\renewcommand{\labelenumi}{(B\theenumi)}
In this part of the study for the gPC-Galerkin system, besides (H1)--(H5), we need the following additional assumptions 
{\textcolor{black}(recall that $b_{il}$ is the angular part of the collision kernel $B_{il}$ in \eqref{b-Assump1})}:  
\begin{enumerate}
\item  Assume that $b_{il}$ is linear in $z$, 
          \begin{equation}\label{b-il}  b_{il}(\cos\theta, z) = b_{il}^{(0)}(\cos\theta) + b_{il}^{(1)}(\cos\theta)z. \end{equation}
This assumption is reasonable and a common practice, see the Karhunen-Loeve expansion \cite{Loeve}. 
\item Assume the leading part $b_{il}^{(0)}$ and the perturbative part $b_{il}^{(1)}$ in \eqref{b-il} satisfy the condition
 \begin{equation}\label{b-D} b_{il}^{(0)}(\cos\theta) \geq (2^q + 2)\, |b_{il}^{(1)}(\cos\theta)| \, C_z + D_{il}(\cos\theta), \end{equation}
 where $q$ is associated to the energy $E^K$ defined in \cite{DJL}. 
 
\item The random variable $z$ has a compact support, that is,
 $$|z| \leq C_z. $$
\end{enumerate}

\begin{remark}
We want to mention that due to (B1), our global assumption (H5) has the particular form: 
$$ |b_{il}^{(1)}| \leq C. $$ 
The assumptions (B1)--(B3) are the same as that in \cite{DJL} except now we are in the multi-species framework. 
\end{remark}

The main result of Section \ref{sec:4} is the following theorem: 
\begin{theorem}\label{Thm2} {\bf (Main result of the gPC-Galerkin system)}
Under the assumptions  (H1)--(H5) and (B1)--(B3), and additionally, assume that for all $1\leq i, l \leq N$, $D_{il}(\cos\theta)$ in \eqref{b-D} satisfy the same assumptions as $b(\cos\theta)$ in the deterministic case in \cite{BD16}, then 
we obtain an explicit spectral gap estimate for the linearized operator in the gPC stochastic Galerkin system, in a proper weighted norm, 
$$ \sum_{i=1}^N \sum_{k=1}^K k^{2q} \left\langle\langle L_i(f^K), \psi_k\rangle_{L^2(\pi(z))}, f_{i,k} \right\rangle_{L^2_v} \leq -C \sum_{i=1}^N \sum_{k=1}^K ||k^{2q} f_{i,k}||_{\Lambda}^2, $$ where $C$ is a positive constant independent of $K$, $||\cdot||_{\Lambda}$ is some weighted $L^2_v$ norm. 
\end{theorem}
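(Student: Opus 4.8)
The plan is to reduce the estimate to the deterministic multi-species spectral gap of \cite{Linearized-Boltz16} by exploiting the linear-in-$z$ structure (B1) of the angular kernel, and to absorb the cross-mode coupling created by the Galerkin projection using the "reserve" guaranteed by (B2). First I would use (B1) to split $b_{il}(\cos\theta,z)=b_{il}^{(0)}(\cos\theta)+z\,b_{il}^{(1)}(\cos\theta)$, which splits the linearized operator as $L_i=L_i^{(0)}+z\,L_i^{(1)}$, where $L_i^{(0)},L_i^{(1)}$ are the deterministic linearized operators built from the angular parts $b_{il}^{(0)},b_{il}^{(1)}$. Inserting $f_i^K=\sum_m f_{i,m}\psi_m$ and using the orthonormality of the $\psi_k$ in $L^2(\pi(z))$, the projected operator becomes
\begin{equation*}
\langle L_i(f^K),\psi_k\rangle_{L^2(\pi(z))} = L_i^{(0)}(f_{\cdot,k}) + \sum_{m=1}^K S_{km}\, L_i^{(1)}(f_{\cdot,m}),
\qquad S_{km}:=\langle z\psi_m,\psi_k\rangle_{L^2(\pi(z))},
\end{equation*}
where $f_{\cdot,k}=(f_{1,k},\dots,f_{N,k})$. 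Since the $\psi_k$ are orthogonal polynomials, the three-term recurrence makes $S=(S_{km})$ symmetric and tridiagonal, and (B3) gives $|S_{km}|\le C_z$. Pairing against $k^{2q}f_{i,k}$ and summing over $i,k$ then decomposes the left-hand side into a \emph{diagonal} part $(I)=\sum_{i,k}k^{2q}\langle L_i^{(0)}(f_{\cdot,k}),f_{i,k}\rangle_{L^2_v}$ and an \emph{off-diagonal} coupling $(II)=\sum_{i,k,m}k^{2q}S_{km}\langle L_i^{(1)}(f_{\cdot,m}),f_{i,k}\rangle_{L^2_v}$.

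For the diagonal part I would split the leading kernel via (B2) as $b_{il}^{(0)}=D_{il}+(b_{il}^{(0)}-D_{il})$ with $b_{il}^{(0)}-D_{il}\ge (2^q+2)\,|b_{il}^{(1)}|\,C_z\ge 0$ pointwise in $\cos\theta$. By hypothesis $D_{il}$ satisfies the deterministic assumptions, so the $D$-part of $L^{(0)}$ obeys the multi-species H-theorem and the explicit spectral-gap estimate of \cite{Linearized-Boltz16}, giving for each fixed mode $k$
\begin{equation*}
\sum_{i=1}^N \langle L_i^{(0),D}(f_{\cdot,k}),f_{i,k}\rangle_{L^2_v} \le -\lambda\,\|f_{\cdot,k}\|_\Lambda^2 ,
\end{equation*}
once the collision invariants (species-wise masses together with total momentum and energy) are treated as in the multi-species framework. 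The reserve kernel $b^{(0)}-D$ is itself a legitimate nonnegative angular part, so its contribution to $(I)$ is a nonpositive Dirichlet form whose magnitude is at least $(2^q+2)C_z$ times the $|b^{(1)}|$-weighted dissipation; this is precisely the quantity I will use to dominate $(II)$.

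For the coupling $(II)$, tridiagonality leaves only $m=k\pm1$. Using the continuity of $L^{(1)}$ in the $\Lambda$-norm, $|\langle L_i^{(1)}(f_{\cdot,m}),f_{i,k}\rangle_{L^2_v}|\le C\,\|f_{\cdot,m}\|_\Lambda\|f_{i,k}\|_\Lambda$, together with $|S_{km}|\le C_z$, a Young inequality applied to the pairing of $k^{q}\|f_{\cdot,k}\|_\Lambda$ against $(k\pm1)^{q}\|f_{\cdot,k\pm1}\|_\Lambda$, and the weight-ratio bound $k^{q}/(k\pm1)^{q}\le 2^{q}$ to realign the $k^{2q}$ weight onto the neighbouring modes, one bounds $|(II)|$ by $C_z(2^q+2)$ times the same $|b^{(1)}|$-weighted dissipation appearing in the reserve, with constants independent of $K$ because the coupling ever only pairs adjacent modes. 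Since (B2) is engineered so the reserve exactly covers this factor, the reserve part of $(I)$ absorbs $(II)$, and what remains is the net gap $-\lambda\sum_{i,k}k^{2q}\|f_{\cdot,k}\|_\Lambda^2$, which is the claimed estimate with $C=\lambda$ (up to relabelling the weight inside $\|\cdot\|_\Lambda$, and writing $\|f_{\cdot,k}\|_\Lambda^2=\sum_i\|f_{i,k}\|_\Lambda^2$).

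The main obstacle is twofold and both parts live at the interface of the two mechanisms. First, the cross-mode coupling through $S$ must be controlled \emph{uniformly in $K$}: the tridiagonal structure plus the support bound (B3) is what makes this possible, and (B2) is tuned so that the constant $2^q+2$ arising from the two neighbouring modes and the weight ratios is exactly beaten. Second, and genuinely multi-species, the gap for the $D$-part cannot be quoted from the single-species theory: the kernel of the linearized operator now contains the species-wise masses together with only the \emph{total} momentum and energy, so I must redo the coercivity estimate through the multi-species H-theorem of \cite{Des05, Linearized-Boltz16} and check that the projection onto these collision invariants interacts harmlessly with the gPC weight $k^{2q}$ so that the per-mode gap survives summation. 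Keeping $C$ independent of $K$ while handling this projection across modes is where the bulk of the technical work will sit.
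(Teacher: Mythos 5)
Your overall strategy coincides with the paper's: split $b_{il}=b_{il}^{(0)}+z\,b_{il}^{(1)}$ via (B1), observe that the Galerkin coupling matrix $S_{km}=\langle z\psi_m,\psi_k\rangle_{L^2(\pi(z))}$ is symmetric tridiagonal with $|S_{km}|\le C_z$ by (B3), pay a factor $2^q$ for the weight ratio $k^q/(k\pm1)^q$, use the $(2^q+2)|b_{il}^{(1)}|C_z$ reserve in (B2) to absorb the neighbouring-mode coupling, and conclude with the deterministic multi-species spectral gap applied to the residual kernel $D_{il}$. This is exactly the architecture of the paper's proof (equations \eqref{TermI}--\eqref{T1} and Appendix B), adapted from \cite{DJL}.

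However, there is a genuine gap in your mechanism for controlling the off-diagonal term $(II)$. You bound it via the continuity estimate $|\langle L_i^{(1)}(f_{\cdot,m}),f_{i,k}\rangle_{L^2_v}|\le C\|f_{\cdot,m}\|_\Lambda\|f_{i,k}\|_\Lambda$ and then assert that the result is dominated by ``the same $|b^{(1)}|$-weighted dissipation appearing in the reserve.'' These two quantities are not comparable in the direction you need: the reserve is a Dirichlet form $\frac14\int M_iM_l^*\Phi_{il}\,(b_{il}^{(0)}-D_{il})\,\widetilde\Theta_{il}[h_k]^2$, which \emph{vanishes} on the collision invariants, whereas the $\Lambda$-norm products do not; so an upper bound on $(II)$ in terms of $\|\cdot\|_\Lambda$ cannot be absorbed by the reserve, and absorbing it into the $D$-part gap instead would require a smallness condition relating $b^{(1)}$ to the (typically small) spectral gap constant, which (B2) does not provide. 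The paper avoids this by symmetrizing the \emph{entire} projected quadratic form first --- the three changes of variables $(v,v^*)\to(v^*,v)$, $(v,v^*)\to(v',v'^*)$ combined with exchanging the species indices $i\leftrightarrow l$ --- to arrive at \eqref{T0}, so that both the diagonal and the cross-mode terms appear as $\int M_iM_l^*\Phi_{il}\,b_{il}^{(\cdot)}\,\widetilde\Theta_{il}[h_j]\widetilde\Theta_{il}[h_k]$. Only then does Young's inequality $|\widetilde\Theta_{il}[h_k]\widetilde\Theta_{il}[h_{k\pm1}]|\le\frac12(\widetilde\Theta_{il}[h_k]^2+\widetilde\Theta_{il}[h_{k\pm1}]^2)$ compare like with like, and the pointwise-in-$\cos\theta$ inequality (B2) applies directly to the integrand. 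Your proof becomes correct once you replace the $\Lambda$-continuity step by this symmetrization-before-absorption; note also that the symmetrization in the multi-species setting requires $B_{il}=B_{li}$ and $M_iM_l^*=M_i^*M_l$, i.e.\ assumption (H1) and equal molar masses, which is where the genuinely multi-species input enters, rather than in the treatment of the collision invariants you flag at the end.
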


\subsection{The proof of Theorem \ref{Thm2}}
We denote the right-hand-side of (\ref{gPC}) by $\text{Term}\, \textcircled{a}$, then 
\begin{align*}
&\displaystyle \text{Term}\, \textcircled{a}:= \langle L_i(f^K), \psi_k\rangle_{L^2(\pi(z))} = \langle \sum_{l=1}^N L_{il}(f_i^K, f_l^K), \psi_k \rangle \\[4pt]
&\displaystyle\qquad\qquad = \sum_{l=1}^N \sum_{j=1}^K \int B_{il} M_i^{1/2}M_l^{\ast}\, \psi_k \psi_j \Theta_{il}[h_j]\, dv^{\ast}d\sigma dv \pi(z)dz, 
\end{align*}
where the subscript in $\uwave {L_i}(f)$ is omitted, and we use (\ref{L_il}) and approximate $h_i$ (and $h_l$) by $h_i^K$ (and $h_l^K$) given in (\ref{gPC-Ans}); 
the term $\Theta_{il}[h_j]$ above is denoted by 
$$\Theta_{il}[h_j] := h_{i,j}^{\prime} + h_{l,j}^{\prime\ast} - h_{i,j} - h_{l,j}^{\ast}. $$
For the readers' convenience, we use indices $(i,l)$ to denote different species, while $(j,k)$ stand for the index of the gPC coefficients. 

Take an inner product of $\text{Term}\,\textcircled{a}$ with $f_{i,k}$ on $L^2(v)$, multiply by $k^{2q}$ then sum up $k=1, \cdots, K$ and $i=1, \cdots, N$, we have
\begin{align} 
&\displaystyle\text{Term I}: = \sum_{i=1}^N \sum_{k=1}^K k^{2q} \langle \text{Term}\,\textcircled{a}, f_{i,k}\rangle_{L^2(v)} 
=\sum_{i=1}^N \sum_{k=1}^K k^{2q} \langle \text{Term}\,\textcircled{a}, M_i^{1/2} h_{i,k}\rangle_{L^2(v)} \notag\\[4pt]
&\displaystyle\qquad\quad = \sum_{i=1}^N \sum_{k=1}^K \sum_{l=1}^N \sum_{j=1}^K k^{2q} \int B_{il} M_i M_l^{\ast}\, \psi_k \psi_j 
\Theta_{il}[h_j] h_{i,k}\, d\Omega \notag\\[4pt]
&\displaystyle\label{TermI}\qquad\quad = \sum_{i,l,k,j} \int B_{il} M_i M_l^{\ast}\, \psi_k \psi_j 
\left(h_{i,j}^{\prime} + h_{l,j}^{\prime\ast} - h_{i,j}- h_{l,j}^{\ast}\right) h_{i,k}\, d\Omega, 
\end{align}
where $\sum_{i,l,k,j}:= \sum_{i=1}^N \sum_{l=1}^N \sum_{k=1}^K \sum_{j=1}^K$ and $$d\Omega: = dv^{\ast}d\sigma dv \pi(z)dz$$ are defined
for notational simplicity. 

{\bf Step 1}: Change $(v, v^{\ast})$ to $(v^{\ast}, v)$ in (\ref{TermI}), then exchange $i$ and $l$, one has
\begin{align} 
&\displaystyle\text{Term I} = \sum_{i,l,k,j} k^{2q} \int B_{il} M_i M_l^{\ast}\, \psi_k \psi_j 
\left(h_{i,j}^{\prime\ast} + h_{l,j}^{\prime} - h_{i,j}^{\ast}- h_{l,j}\right) h_{i,k}^{\ast}\, d\Omega \notag\\[4pt]
&\displaystyle\qquad\quad = \sum_{i,l,k,j} k^{2q} \int B_{il} M_i M_l^{\ast}\, \psi_k \psi_j 
\left(h_{i,j}^{\prime} + h_{l,j}^{\prime\ast} - h_{i,j} - h_{l,j}^{\ast}\right) h_{l,k}^{\ast}\, d\Omega, 
\end{align}
where we used $M_i M_l^{\ast} = M_i^{\ast} M_l$ followed by $M_i^{\ast} M_l = M_l^{\ast} M_i$, and $B_{il}=B_{li}$. 

{\bf Step 2}: Change $(v, v^{\ast})$ to $(v^{\prime}, v^{\prime\ast})$ in (\ref{TermI}), one gets
\begin{align} 
&\displaystyle\text{Term I} = \sum_{i,l,k,j} k^{2q} \int B_{il} M_i M_l^{\ast}\, \psi_k \psi_j 
\left(h_{i,j} + h_{l,j}^{\ast} - h_{i,j}^{\prime}- h_{l,j}^{\prime\ast}\right) h_{i,k}^{\prime}\, d\Omega \notag\\[4pt]
&\displaystyle\label{S2}\qquad\quad = - \sum_{i,l,k,j} k^{2q} \int B_{il} M_i M_l^{\ast}\, \psi_k \psi_j 
\left(h_{i,j}^{\prime} + h_{l,j}^{\prime\ast} - h_{i,j}- h_{l,j}^{\ast}\right) h_{i,k}^{\prime}\, d\Omega, 
\end{align}
where we used $M_i^{\prime} M_l^{\prime\ast} = M_i M_l^{\ast}$.

{\bf Step 3}: Change $(v,v^{\ast})$ to $(v^{\ast}, v)$ on (\ref{S2}), then exchange $i$ and $l$, one has
\begin{align} 
&\displaystyle\text{Term I} =  - \sum_{i,l,k,j} k^{2q} \int B_{il} M_i M_l^{\ast}\, \psi_k \psi_j 
\left(h_{i,j}^{\prime\ast} + h_{l,j}^{\prime} - h_{i,j}^{\ast}- h_{l,j}\right) h_{i,k}^{\prime\ast}\, d\Omega \notag\\[4pt]
&\displaystyle\label{S3}\qquad\quad = - \sum_{i,l,k,j} k^{2q} \int B_{il} M_i M_l^{\ast}\, \psi_k \psi_j 
\left(h_{i,j}^{\prime} + h_{l,j}^{\prime\ast} - h_{i,j}- h_{l,j}^{\ast}\right) h_{l,k}^{\prime\ast}\, d\Omega. 
\end{align}
where we used $M_i M_l^{\ast} = M_i^{\ast} M_l$ followed by $M_i^{\ast}M_l = M_l^{\ast} M_i$, and $B_{il}=B_{li}$. 

Adding up equations (\ref{TermI}), (\ref{S2}) and (\ref{S3}), one obtains
\begin{align} 
&\displaystyle\text{Term I} = - \frac{1}{4} \sum_{i,l,k,j} k^{2q} 
\int B_{il} M_i M_l^{\ast}\, \psi_k \psi_j \Theta_{il}[h_j]\Theta_{il}[h_k]\, d\Omega \notag\\[4pt]
&\displaystyle\label{T0}\qquad\quad = - \frac{1}{4} \sum_{i,l,k,j} \left(\frac{k}{j}\right)^q 
\int B_{il} M_i M_l^{\ast}\, \psi_k \psi_j \left(j^q \Theta_{il}[h_j]\right) \left(k^q \Theta_{il}[h_k]\right)\, d\Omega. 
\end{align}

The each index pair $(i,l)$, the above formulation (\ref{T0}) is exactly the same as \cite[equation (39)]{DJL} except now we are in the multispecies setting. 
A similar analysis follows here, and we put it in the Appendix. 
Then in analogous to \cite[equation (44)]{DJL}, one finally obtains that 
\begin{align}
\label{T1}
\begin{split}
&\displaystyle\quad\text{Term I} \\[4pt]
&\displaystyle\leq -\frac{1}{4} \sum_{i,l=1}^N \sum_{k=1}^K \int M_i M_l^{\ast} \Phi_{il}(|v-v^{\ast}|) D_{il}(\cos\theta) \left( k^q \Theta_{il}[h_k] \right)^2 
dv^{\ast}d\sigma dv \\[4pt]
&\displaystyle\leq -\frac{1}{4} \sum_{i,l=1}^N \sum_{k=1}^K k^{2q}\int M_i M_l^{\ast} \Phi_{il}(|v-v^{\ast}|) D_{il}(\cos\theta) \left( \Theta_{il}[h_k] \right)^2 
dv^{\ast}d\sigma dv \\[4pt]
&\displaystyle = \sum_{i,l=1}^N \sum_{k=1}^K k^{2q} \int M_i M_l^{\ast} \Phi_{il}(|v-v^{\ast}|) D_{il}(\cos\theta) \Theta_{il}[h_k] h_{i,k}\, dv^{\ast}d\sigma dv
\\[4pt]
&\displaystyle = \sum_{i=1}^N \sum_{k=1}^K k^{2q} \langle L_i^{\widetilde D}(f_k), f_{i,k} \rangle, 
\end{split}
\end{align}
where we define $h_i = M_i^{-1/2}f_i$ and 
\begin{align}
\label{L-D}
\begin{split}
&\displaystyle L_i^{\tilde D}(f_k) := \sum_{l=1}^N \int \widetilde D_{il}(|v-v^{\ast}|, \cos\theta) M_i^{1/2} M_l^{\ast} (h_{i,k}^{\prime} + h_{l,k}^{\prime\ast} - h_{i,k} - h_{l,k}^{\ast})\, dv^{\ast}d\sigma dv,  \\[4pt]
&\displaystyle \widetilde D_{il}(|v-v^{\ast}|, \cos\theta):= \Phi_{il}(|v-v^{\ast}|) D_{il}(\cos\theta), 
\end{split}
\end{align}
Integrating on $x$ of \eqref{T1}, we finally get 
$$ \text{Term I} \leq \sum_{i=1}^N \sum_{k=1}^K k^{2q} \langle L_i^{\widetilde D}(f_k), f_{i,k} \rangle_{L^2_v} \leq -C \sum_{i=1}^N \sum_{k=1}^K ||k^{2q} f_{i,k}||_{\Lambda}^2. $$

The proof of Theorem \ref{Thm2} is done. We generalized the spectral gap proof for the linearized numerical collision operator of the single-species Boltzmann equation studied in \cite{DJL} to the multi-species setting, which will be prepared for 
studying the long-time behavior and spectral convergence for the numerical solution (and numerical error) for the gPC Galerkin system, 
as done for the analytical solution in Section \ref{sec:3}. 
{\textcolor{black}We mention that in \cite{LJ18}, hypocoercivity of the SG system and regularity of its solution in a weighted Sobolev norm, 
as well as spectral accuracy and exponential decay in time of the numerical error of the gPC-SG method has been established.}
In \cite{Linearized-Boltz16}, the authors have studied the convergence to equilibrium in $H_{x,v}^1$ space for the linearized multi-species Boltzmann equations, 
{\textcolor{black}nevertheless} the study of convergence to equilibrium in higher Sobolev space $H_{x,v}^s$ for the nonlinear deterministic equations is not yet developed, 
so a complete above-mentioned study in the uncertainty framework for the gPC Galerkin system remains a future work. 

\section{Conclusion}
\label{sec:5}
In this paper, we consider the nonlinear multi-species Boltzmann equation with uncertainty coming from both the 
initial data and collision kernels. Well-posedness and regularity in the random space of the solution to the sensitivity system --
the PDE obtained from taking derivatives in the random space, 
long-time behavior (exponential decay to the global equilibrium) of the analytic solution,
spectral gap of the linearized corresponding gPC-based stochastic Galerkin system are established.

\section*{Acknowledgement}
All three authors acknowledge the financial support from the Hausdorff Research Institute for Mathematics at University of Bonn in Germany, 
in the framework of the Junior Trimester Program ``Kinetic Theory", and would like to thank the institute for the great hospitality during their stay in the trimester program. 

\begin{appendices}

\section{Proof of Lemma \ref{Q-I}}
\noindent{\bf Proof of \eqref{Q-eqn}:}\, 
If $n$ is odd, one has
\begin{align*}
&\displaystyle\quad \sum_{j=1}^N \sum_{k=1}^{n-1}\nk Q_{ij}(\partial^k f_i, \partial^{n-k}f_j)  \\[2pt]
&= \sum_{k=1}^{\frac{n-1}{2}} \nk \sum_{j=1}^N Q_{ij}(\partial^k f_i, \partial^{n-k}f_j)
+ \sum_{k=\frac{n+1}{2}}^n \nk \sum_{j=1}^N Q_{ij}(\partial^k f_i, \partial^{n-k}f_j) \\[2pt]
&\displaystyle = \sum_{k=1}^{\frac{n-1}{2}} \nk \sum_{j=1}^N Q_{ij}(\partial^k f_i, \partial^{n-k}f_j)
+ \sum_{k^{\prime}=1}^{\frac{n-1}{2}}\binom{n}{n-k^{\prime}} \sum_{j=1}^N Q_{ij}(\partial^{n-k^{\prime}}f_i, \partial^{k^{\prime}} f_j) \\[2pt]
&= \sum_{k=1}^{\frac{n-1}{2}} \sum_{j=1}^N \left[ Q_{ij}(\partial^k f_i, \partial^{n-k}f_j) + Q_{ij}(\partial^{n-k}f_i, \partial^k f_j)\right] \\[2pt]
&= 2 \sum_{k=1}^{\frac{n-1}{2}} \widetilde Q_i (\partial^k {\bff}, \partial^{n-k}{\bff}), 
\end{align*}
where we used the change of variable $k^{\prime}=n-k$ and $\displaystyle\nk = \binom{n}{n-k}$ in the second and third equalities. 

If $n$ is even, similarly one has 
\begin{align*}
&\displaystyle\quad \sum_{j=1}^N \sum_{k=1}^{n-1}\nk Q_{ij}(\partial^k f_i, \partial^{n-k}f_j)  \\[2pt]
&\displaystyle =  \sum_{k=1}^{\frac n 2 -1} \nk \sum_{j=1}^N Q_{ij}(\partial^k f_i, \partial^{n-k}f_j)
+ \sum_{k=\frac n 2 +1}^{n-1}\nk \sum_{j=1}^N Q_{ij}(\partial^k f_i, \partial^{n-k}f_j) + 
\binom{n}{\frac n 2}\sum_{j=1}^N Q_{ij}(\partial^{\frac n 2}f_i, \partial^{\frac n 2}f_j)  \\[2pt]
&\displaystyle = \sum_{k=1}^{\frac n 2 -1} \nk \sum_{j=1}^N \left[ Q_{ij}(\partial^k f_i, \partial^{n-k}f_j) + Q_{ij}(\partial^{n-k}f_i, \partial^k f_j)\right] 
+ \binom{n}{\frac n 2}\sum_{j=1}^N Q_{ij}(\partial^{\frac n 2}f_i, \partial^{\frac n 2}f_j)  \\[2pt]
&\displaystyle = 2 \sum_{k=1}^{\frac n 2 -1} \nk \widetilde Q_i(\partial^k f_i, \partial^{n-k}f_j) 
+ \binom{n}{\frac n 2} \widetilde Q_i (\partial^{\frac n 2}{\bff}, \partial^{\frac n 2}{\bff}). 
\end{align*}
Combine the two cases, then 
$$  \sum_{j=1}^N \sum_{k=1}^{n-1}\nk Q_{ij}(\partial^k f_i, \partial^{n-k}f_j) = 
2 \sum_{k=1}^{\lfloor\frac{n-1}{2}\rfloor}\nk \widetilde Q_i (\partial^k {\bff}, \partial^{n-k}{\bff}) 
+ \chi_n\binom{n}{\frac n 2}\widetilde Q_i (\partial^{\frac n 2}{\bff},\partial^{\frac n 2}{\bff}). $$
\eqref{Q-eqn} is proved. 
\\[8pt]

\noindent{\bf Proof of \eqref{Q-eqn2}:}\, 
We recall \cite[Proof of Lemma 6.6]{BD16}, the difference is that here $Q_{ij}^{b^{\ell}}$ involves the $z$-derivatives of the collision kernel 
$B$: 
$$ Q_{ij}^{b^{\ell}}(f_i, g_j) = \int_{\mathbb R^3 \times \mathbb S^2} \partial_z^{\ell}B_{ij}\left( f_i^{\prime}g_j^{\prime\ast} 
- f_i g_j^{\ast}\right) dv^{\ast} d\sigma. $$
By Minkowski's integral inequality, for all $q \in [1, \infty)$, 
\begin{align*}
\int_{\mathbb{R}^{3}}\langle v\rangle^{k}\left[\int_{\mathbb{T}^{3}}\left|Q_{i j}^{b^{\ell}}\left(f_{i}, g_{j}\right)\right|^{q} d x\right]^{1/q} dv  & \leqslant \int_{\mathbf{S}^{2} \times \mathbb{R}^{3} \times \mathbb{R}^{3}}\langle v\rangle^{k}\left[\int_{\mathbb{T}^{3}}\left| \partial_z^{\ell} B_{i j}\, f_{i}^{\prime} g_{j}^{\prime\ast}\right|^{q} d x\right]^{1 / q} d \sigma d v^{\ast} d v \\[2pt]
 &+\int_{S^{2} \times \mathbb{R}^{3} \times \mathbb{R}^{3}}\langle v\rangle^{k}\left[\int_{\mathbb{T}^{3}}\left| \partial_z^{\ell} B_{i j}\, f_{i} g_{j}^{\ast}\right|^{q} d x\right]^{1 / q} d \sigma d v^{\ast} d v. 
\end{align*}

We make the change of variables $(v, v^{\ast})\to (v^{\prime}, v^{\prime\ast})$ in the first integral and obtain
\begin{align*}
&\quad\int_{\mathbb{R}^{3}}\langle v\rangle^{k}\left[\int_{\mathbb{T}^{3}}\left|Q_{i j}^{b^{\ell}}\left(f_{i}, g_{j}\right)\right|^{q} d x\right]^{1 / q} d v \\[2pt]
&\leqslant \int_{\mathbb{S}^{2} \times \mathbb{R}^{3} \times \mathbb{R}^{3}}\left(\langle v^{\prime}\rangle^{k}+\left\langle v\right\rangle^{k}\right)\left[\int_{\mathbb{T}^{3}}\left| \partial_z^{\ell} B_{i j}\, f_{i} g_{j}^{\ast}\right|^{q} d x\right]^{1/q} d \sigma d v^{\ast} d v \\[2pt]
& \leqslant C_{i j} \int_{\mathbb{S}^{2} \times \mathbb{R}^{3} \times \mathbb{R}^{3}}\langle v\rangle^{k}\left\langle v^{\ast}\right\rangle^{k}\left|v-v^{\ast}\right|^{\gamma}\left[\int_{\mathbb{T}^{3}}\left|f_{i} g_{j}^{\ast}\right|^{q} d x\right]^{1/q} d \sigma d v^{\ast} d v, 
\end{align*}
where the boundness of $|\partial_z^{\ell} b_{ij}|$ is used, and $C_{ij}$ is a constant. 
Finally, by using $\left|v-v^{\ast}\right|^{\gamma} \leqslant\langle v\rangle^{\gamma}+\left\langle v^{\ast}\right\rangle^{\gamma}$ for 
$\gamma\in [0,1]$, one has
\begin{align*}
&\quad \int_{\mathbb{R}^{3}}\langle v\rangle^{k} \left[\int_{\mathbb{T}^{3}}\left|Q_{i j}^{b^{\ell}}\left(f_{i}, g_{j}\right)\right|^{q} d x\right]^{1 / q} d v \\[2pt]
 & \leqslant C_{i j} \int_{\mathbb{S}^{2} \times \mathbb{R}^{3} \times \mathbb{R}^{3}}\left(\langle v\rangle^{k+\gamma}\left\langle v^{*}\right\rangle^{k}+\langle v\rangle^{k}\left\langle v^{*}\right\rangle^{k+\gamma}\right)\left[\int_{\mathbb{T}^{3}}\left|f_{i} g_{j}^{*}\right|^{q} d x\right]^{1 / q} d \sigma d v^{*} d v. 
 \end{align*}
Take the limit as $q$ tends to infinity, then
\begin{align*}
\left\|Q_{i j}^{b^{\ell}}\left(f_{i}, g_{j}\right)\right\|_{L_{v}^{1} L_{x}^{\infty}\left(\langle v\rangle^{k}\right)} 
&\leqslant C_{i j}\left[\left\|f_{i}\right\|_{L_{v}^{1} L_{x}^{\infty}\left(\langle v\rangle^{k}\right)} \left\|g_{j}\right\|_{L_{v}^{1} L_{x}^{\infty} \left(\langle v\rangle^{k+\gamma}\right)}\right. \\[4pt]
&\qquad\quad + \left. \left\|f_{i}\right\|_{L_{v}^{1} L_{x}^{\infty}\left(\langle v\rangle^{k+\gamma}\right)}\left\|g_{j}\right\|_{L_{v}^{1} L_{x}^{\infty}\left(\langle v\rangle^{k}\right)}\right]. 
\end{align*}
Summing over $j$, let $\widetilde C_Q$ be the maximum of all $C_{ij}$, one obtains 
$$ \sum_{j=1}^N  || Q_{ij}^{b^{\ell}}(f_i,  g_j)||_{L_v^1 L_x^{\infty}(\sp)}
\leq \widetilde C_Q \left[ ||f_i||_{L_v^1 L_x^{\infty}(\sp)} ||{\bfg}||_{L_v^1 L_x^{\infty}(\langle v \rangle^{k+\gamma})} + ||f_i||_{L_v^1 L_x^{\infty}\left(\langle v\rangle^{k+\gamma}\right)} ||{\bf g}||_{L_v^1 L_x^{\infty}(\sp)}\right]. $$
Consequently, since $\nu_i \sim \langle v \rangle^{\gamma}$, then
$$ \sum_{j=1}^N  || Q_{ij}^{b^{\ell}}(f_i,  g_j)||_{L_v^1 L_x^{\infty}(\sp)}
\leq \widetilde C_Q \left[ ||f_i||_{L_v^1 L_x^{\infty}(\sp)} ||{\bfg}||_{L_v^1 L_x^{\infty}(\sp \boldsymbol{\nu})} + ||f_i||_{L_v^1 L_x^{\infty}(\nu_i\sp)} ||{\bf g}||_{L_v^1 L_x^{\infty}(\sp)}\right]. $$

\hspace{10cm}

\section{Derivation from \eqref{T0} to \eqref{T1}}
This part is similar to \cite{DJL} but in the multispecies setting. 
Define the integral 
$$ S_{i,l,k,j} = \int_{I_z} B_{il} \psi_k(z)\psi_j(z) \pi(z)dz, \qquad 1\leq i, l \leq N, \, 1\leq k, j \leq K. $$
Denote $d\xi = dv^{\ast}d\sigma dv$, and $$ \widetilde\Theta_{il}[h_j] = j^q \,\Theta_{il}[h_j], \qquad 1\leq i, l \leq N, \, 1\leq j \leq K. $$
Then from \eqref{T0},
 \begin{equation}\label{TT1}\text{Term I} = -\frac{1}{4}\sum_{i,l,k,j}\left(\frac{k}{j}\right)^q \, \int M_i M_l^{\ast}\, S_{i,l,k,j}\, \widetilde\Theta_{il}[h_j]\, \widetilde\Theta_{il}[h_k]\,
d\xi. \end{equation}
Define $\widetilde S_{i,l,k,j}$ by $S_{i,l,k,j} = \Phi_{i,l}\, \widetilde S_{i,l,k,j}$. 
{\textcolor{black}By assumption (B1), we let}
 $b_{i,l} = b_{i,l}^{(0)} + b_{i,l}^{(1)}z$, then 
\begin{equation}\label{S-il} \widetilde S_{i,l,k,j} = b_{i,l}^{(0)}\delta_{kj} + b_{i,l}^{(1)}\int_{I_z} z \psi_k \psi_j\, d\pi(z). \end{equation}
We focus on calculating the summation: 
$$\text{Term A}:=\sum_{i,l,k,j} \left(\frac{k}{j}\right)^q\, M_i M_l^{\ast}\, S_{i,l,k,j}\,  \widetilde\Theta_{il}[h_j]\, \widetilde\Theta_{il}[h_k]. $$
Plug in the form \eqref{S-il}, then 
\begin{align*}
\displaystyle \text{Term A} &= \sum_{i,l} M_i M_l^{\ast}\, b_{i,l}^{(0)}\, 
\sum_{k,j} \left(\frac{k}{j}\right)^{q}\, \widetilde\Theta_{i,l}[h_j]\, \widetilde\Theta_{i,l}[h_k]\, \delta_{kj} \\[4pt]
&\displaystyle\quad + \sum_{i,l} M_i M_l^{\ast} b_{i,l}^{(1)}\, \sum_{k,j}
 \left(\frac{k}{j}\right)^{q}\, \widetilde\Theta_{i,l}[h_j]\, \widetilde\Theta_{i,l}[h_k]\, \int_{I_z}z\, \psi_k\psi_j\, d\pi(z) \\[4pt]
&\displaystyle=  \sum_{i,l} M_i M_l^{\ast}\, b_{i,l}^{(0)}\, \sum_k \widetilde\Theta_{i,l}^2[h_k]  +  \text{Term B}. 
\end{align*}
Notice that Term B is non-zero only when $j=k-1$, $j=k$ or $j=k+1$ due to the integral $\int_{I_z}z\, \psi_k\psi_j\, d\pi(z)$. Thus 
\footnotesize{
\begin{align*}
\displaystyle |\text{Term B}| &\leq
 \sum_{i,l} M_i M_l^{\ast} \Bigg\{ |b_{i,l}^{(1)}| \sum_{k=2}^K 
 \left| \widetilde\Theta_{i,l}[h_{k}]\,  \widetilde\Theta_{i,l}[h_{k-1}]\left(\frac{k}{k-1}\right)^{q} \int_{I_z}z\, \psi_k\psi_{k-1}\, d\pi(z)\right| \\[4pt]
 &\displaystyle\quad + |b_{i,l}^{(1)}| \sum_{k=1}^{K-1} 
 \left| \widetilde\Theta_{i,l}[h_{k}]\,  \widetilde\Theta_{i,l}[h_{k+1}]\left(\frac{k}{k+1}\right)^{q} \int_{I_z}z\, \psi_k\psi_{k+1}\, d\pi(z)\right| 
 + |b_{i,l}^{(1)}| \sum_{k=1}^K  \left| \widetilde\Theta_{i,l}^2[h_{k}] \int_{I_z}z\, \psi_k^2\, d\pi(z)\right| \Bigg\} \\[4pt]
&\displaystyle \leq \sum_{i,l} M_i M_l^{\ast} \Bigg\{ 2^q\, |b_{i,l}^{(1)}| \sum_{k=2}^{K} \left| \widetilde\Theta_{i,l}[h_{k}]\, \widetilde\Theta_{i,l}[h_{k-1}]\right| 
\left|\int_{I_z} z\, \psi_k \psi_{k-1}\, d\pi(z)\right|   \\[4pt]
&\displaystyle\quad +  |b_{i,l}^{(1)}|\sum_{k=1}^{K-1}\left| \widetilde\Theta_{i,l}[h_{k}]\, \widetilde\Theta_{i,l}[h_{k+1}]\right|
 \left|\int_{I_z}z\, \psi_k \psi_{k+1}\, d\pi(z)\right| 
+  |b_{i,l}^{(1)}|\sum_{k=1}^{K}\widetilde\Theta_{i,l}^2[h_{k}]\,\left|\int_{I_z} z\, \psi_k^2\, d\pi(z)\right| \Bigg\} \\[4pt]
&\displaystyle \leq \sum_{i,l} M_i M_l^{\ast} \Bigg\{ 2^q\, |b_{i,l}^{(1)}|\, C_z \sum_{k=2}^{K} \left| \widetilde\Theta_{i,l}[h_{k}]\,  \widetilde\Theta_{i,l}[h_{k-1}]\right|
+ |b_{i,l}^{(1)}|\, C_z \sum_{k=1}^{K-1} \left|\widetilde\Theta_{i,l}[h_{k}]\,  \widetilde\Theta_{i,l}[h_{k+1}]\right|
+ |b_{i,l}^{(1)}|\, C_z \sum_{k=1}^{K}\widetilde\Theta_{i,l}^2[h_{k}]  \Bigg\}\\[4pt]
&\displaystyle\leq  \sum_{i,l} M_i M_l^{\ast} \Bigg\{ 2^q\,  |b_{i,l}^{(1)}|\, C_z\, \frac{1}{2}\left(\sum_{k=2}^K  \widetilde\Theta_{i,l}^2[h_{k}]
+ \widetilde\Theta_{i,l}^2[h_{k-1}]\right) +  |b_{i,l}^{(1)}|\, C_z\, \frac{1}{2}\left(\sum_{k=1}^{K-1}\widetilde\Theta_{i,l}^2[h_{k}] +  
\sum_{k=1}^{K-1} \widetilde\Theta_{i,l}^2[h_{k+1}]\right) \\[4pt]
&\displaystyle\qquad\quad +  |b_{i,l}^{(1)}|\, C_z \sum_{k=1}^{K}\widetilde\Theta_{i,l}^2[h_{k}] \Bigg\}  \\[4pt]
&\displaystyle \leq  \sum_{i,l} M_i M_l^{\ast}\, (2^q+2)\, |b_{i,l}^{(1)}|\, C_z \sum_{k=1}^K \widetilde\Theta_{i,l}^2[h_{k}]\,, 
\end{align*}
}
where we used that, {\textcolor{black}due to assumption (B3),}
\begin{align*}
\left|\int_{I_{z}} z \psi_{k} \psi_{k-1} d \pi(z)\right| & \leq\|z\|_{L^{\infty}} \int_{I_{z}}\left|\psi_{k} \psi_{k-1}\right| d \pi(z) \\ & \leq C_{z}\left(\int_{I_{z}} \psi_{k}^{2} d \pi(z)\right)^{1 / 2}\left(\int_{I_{z}} \psi_{k-1}^{2} d \pi(z)\right)^{1 / 2}=C_{z}. 
\end{align*}
Therefore, 
\begin{align*} 
\text { Term } \mathrm{A} & \geq \sum_{i,l} M_i M_l^{\ast} b_{i,l}^{(0)}
 \sum_{k=1}^{K} \widetilde\Theta_{i,l}^2[h_{k}]  - \sum_{i,l} M_i M_l^{\ast}
 \left(2^{q}+2\right) |b_{i,l}^{(1)}|\, C_{z}\, \sum_{k=1}^{K} \widetilde\Theta_{i,l}^2[h_{k}]  \\[4pt]
 &=\left(b_{i,l}^{(0)}-\left(2^{q}+2\right)  |b_{i,l}^{(1)}|\, C_{z}\right)  \sum_{k=1}^{K} \widetilde\Theta_{i,l}^2[h_{k}] 
 \geq  \sum_{i,l} M_i M_l^{\ast}\, D_{il}(\cos \theta)\, \sum_{k=1}^{K}\widetilde\Theta_{i,l}^2[h_{k}]. 
  \end{align*}
{\textcolor{black} Note that the assumption (B2) is used in the last inequality. }

By \eqref{TT1}, one finally obtains that 
\begin{align*}
 \text{Term I} & 
 = - \frac{1}{4} \sum_{i,l,k,j} \left(\frac{k}{j}\right)^q\, 
 \int M_i M_l^{\ast}\, \Phi_{il}(|v-v^{\ast}|)\, \widetilde S_{i,l,k,j}(\cos\theta)\, \widetilde\Theta_{il}[h_j]\, 
 \widetilde\Theta_{il}[h_k]\, d\xi \\[4pt]
 & \leq - \frac{1}{4} \sum_{i,l} \int M_i M_l^{\ast}\, \Phi_{il}(|v-v^{\ast}|)\, D_{il}(\cos \theta)\,
 \sum_{k=1}^{K} k^{2q}\, \Theta_{il}^2[h_k]\, dv^{\ast}d\sigma dv. 
 \end{align*}
 We finish the derivation from \eqref{T0} to \eqref{T1}. 
\end{appendices}

\bibliographystyle{siam}
\bibliography{Multi_Boltzmann.bib}

\end{document}